\newtheorem{theo}{Theorem}[section]
\newtheorem{prop}[theo]{Proposition}  
\newtheorem{coro}[theo]{Corollary}
\newtheorem{rema}[theo]{Remark}
\newtheorem{lema}[theo]{Lemma}
\newtheorem{defi}[theo]{Definition}
\newtheorem{fak}[theo]{Fact}
\newcommand{\klas}{\mathcal{W}(j,t,(Z(k))_{k=1}^n)}
\newcommand{\1}{\mathbf{1}}
\newcommand{\cons}{K}
\def\be#1\ee{\begin{equation}#1\end{equation}}
\newcommand{\ba}{\begin{eqnarray} }
\newcommand{\ea}{\end{eqnarray} }
\newcommand{\lv}{\left\lVert}
\newcommand{\rv}{\right\rVert}
\def\bt#1\et{\begin{theo}#1\end{theo}}
\def\bl#1\el{\begin{lema}#1\end{lema}}
\def\bp#1\ep{\begin{prop}#1\end{prop}}
\def\bd#1\ed{\begin{defi}#1\end{defi}}
 \global\long\def\cbr#1{\left\{  #1\right\}  }
 \global\long\def\rbr#1{\left(#1\right)}
\def\ccE{{\cal E}}
\def\ccF{{\cal F}}
\def\ccG{{\cal G}}
\def\va{\varepsilon}
\def\ra{\rightarrow}
\def\E{\mathbf{E}}
\def\P{\mathbf{P}}
\def\N{{\mathbb N}}
\def\R{{\mathbb R}}
\def\ls{\leqslant}
\def\gs{\geqslant}
\providecommand{\keywords}[1]{\textbf{\textit{Keywords: }} #1}
\providecommand{\klaso}[1]{\textbf{\textit{AMS MSC 2010: }} #1}
\begin{document}

\title{\bf  Small cover approach to the suprema of positive canonical processes\footnote{The authors were supported by National Science Centre, Poland grants 2019/35/B/ST1/04292 (W. Bednorz) and 2021/40/C/ST1/00330 (R. Meller). }.}
\author{Witold Bednorz\footnote{Institute of Mathematics, University of Warsaw, Banacha 2, 02-097 Warsaw, Poland.},  Rafał Martynek\footnote{Institute of Mathematics, University of Warsaw, Banacha 2, 02-097 Warsaw, Poland.} and Rafał Meller\footnote{Institute of Mathematics, University of Warsaw, Banacha 2, 02-097 Warsaw, Poland.; Institute of Mathematics, Polish Academy of Sciences, Śniadeckich 8,
00-656 Warsaw, Poland.}}
\maketitle

\begin{abstract}
We extend the recent result of Park and Pham on positive selector processes
\cite{Pa-Ph} to canonical processes generated by non-i.i.d. non-negative random variables satisfying minimal tail assumptions. We use it to derive a new comparison result for canonical stochastic processes based on variables that are uniformly distributed on generalized Orlicz balls (in particular on $\ell_p$ balls). We also conjecture that our result can be used to obtain new Sudakov-type bounds and show a result in this direction.\\
\keywords{expected suprema, positive processes, $p$-smallness.} \\
\klaso{60G15, 60G17.}
\end{abstract}

\section{Introduction}
One of the most influential results in the modern probability theory is Talagrand's Majorizing Measure Theorem concerning the expected value of the supremum of the Gaussian process \cite{TGGaus}. It describes exactly the value of $\E \sup_{t\in T} G_t$, where $(G_t)_{t\in T}$ is a centered, Gaussian stochastic process, in terms of the geometric properties of a metric space $(T,d)$ with $d^2(s,t)=\E (G_t-G_s)^2$. Not long after establishing this result, Talagrand discovered that the relation between the size of the process and the geometry of the index set can be extended to a broader class of processes. He was able to characterize the expectation of suprema of canonical processes based on variables with p.d.f given by $C(\alpha)\exp(-|x|^\alpha)$, $\alpha\gs 1$ \cite{TG2}. In addition, \cite{TalID} gives partial results on infinitely divisible processes were obtained together with some fundamental properties of the Bernoulli processes. These works from the early 90s can be considered as an informal beginning of a program aiming at characterizing the expected value of the suprema of canonical stochastic processes with deterministic quantities. R. Latała generalized Talagrand's result \cite{TG2} to variables with log-concave tails that do not decrease too fast \cite{Lat1}. Recently, it was discovered that it is sufficient to assume the appropriate moment asymptotic of the variables \cite{Lat2}, which makes it one of the most general results in the field.\\
\noindent At first, the problems mentioned above were tackled by means of the so-called majorizing measures, an idea due to X. Fernique. However, it soon turned out that a more natural tool was the chaining method, which is still used successfully today. Chaining, invented by Kolmogorov, was already a classical method when Talagrand published his seminal work \cite{TGGaus}. What was missing at the time was a change in the way of thinking about the chaining, which we owe to Talagrand.\\
Unfortunately, it seems that the chaining method is slowly reaching its limits. One of the first symptoms was the proof of Bernoulli's hypothesis \cite{Bernul} (characterization of $\E \sup_{t\in T} \sum_i t_i \va_i$, where $\va_1,\ldots$ are independent random signs). This result can be treated as the limiting case $\alpha=\infty$ of the canonical processes based on variables with the tails of order $\exp(-|x|^{\alpha})$. The only method used there was chaining, but the proof is very long and not easy to follow. The other problem with chaining is that it cannot take advantage of certain structural properties of the process, such as non-negativity. In his recent monograph \cite{TG4} Talagrand showed that the size of the expected suprema of empirical processes can be explained by decomposing the process into a sum of two processes. The expectation of the suprema of the first process can be described by the chaining method while the second process is positive. The analogous result was obtained for infinitely divisible processes \cite{ID}. This shows the need to create new tools for estimating the value of the expectation of suprema of non-negative canonical processes. \\
An alternative method for estimating the supremum of a process is the convex hull method (which is suitable for estimating any process, not necessarily a non-negative one). Although it is somehow related to the topic of this paper, we skip the details of this approach. The interested reader is referred to \cite{Lat3,TG4}. It is not entirely clear whether this method will be successful. If the problem can be solved by the chaining method, it is quite easy to construct the optimal convex hull. Unfortunately, there are currently no other tools in the literature to construct one.\\
In this work, we focus on the small cover approach proposed by Talagrand. His idea comes from the study of Gaussian processes. Let us denote such a process by $(G_t)_{t\in T}$, $T\subset \R^d$. It is easy to derive  from the main result of \cite{TGGaus} that there exists a numerical constant $L>0$ (a constant that does not depend on the process) and $t_1,t_2\ldots \in \R^d$ such that
\begin{align*}
\left\{\sup_{t\in T}|G_t| \gs L\E \sup_{t\in T} |G_t|   \right\} &\subset \bigcup_{k=1}^\infty \{|G_{t_k}| \gs 1 \},\\
\sum_{k\gs 1} \P(|G_{t_k}|  \gs 1) &\ls 1/2.
\end{align*}
In other words, there exists a simple witness of the fact that the process $(G_t)_{t\in T}$ cannot be too large.
Talagrand formulated a counterpart of this theorem for the selector process. He conjectured that there exists family $\ccG$ of subsets of $[d]$ such that
\begin{equation}\label{eq:selektory1}
\begin{split}
     \left\{ \sup_{t\in T} \sum_i t_i \delta_i \geq L \E \sup_{t\in T} \sum_i t_i \delta_i \right\} &\subset \bigcup_{W \in \ccG} \{ \forall_{i\in W} \delta_i=1\},\\
    \sum_{W\in \ccG} \P(\forall_{i\in W} \delta_i=1)&= \sum_{W\in \ccG} p^{|W|}\leq 1/2,  
\end{split}
\end{equation}
where $T\subset  \R^d_+$, and $(\delta_i)_{i\ls d}$ are i.i.d. Bernoulli r.v.'s with $p=\P(\delta_1=1)$.  We will refer to $\ccG$ as the small cover of the event $\left\{\sup_{t\in T}\sum_{i=1}^{d}t_i\delta_i\gs L\delta(T)\right\}$. Talagrand proved \eqref{eq:selektory1} for  $T\subset\{0,1\}^d$. His conjecture was recently confirmed by  Park and Pham \cite{Pa-Ph}. In the recent paper \cite{my} we provide an alternative proof of this fact and the counterpart for non-negative infinitely divisible processes. \\
This paper has two goals. The first is to extend the Park and Pham result to a broader class of processes. One of our main results,
Theorem \ref{thm:konc}, shows that if only $X_1,\ldots,X_d$ are nonnegative i.i.d. r.v.'s satisfying certain, undemanding, tail assumption, then there exist "simple" events $A_1, A_2\ldots \subset \R^d_+$ such that
\[\left\{\sup_{t\in T} \sum_{i=1}^d t_i X_i \gs L \E \sup_{t\in T} \sum_{i=1}^d t_i X_i \right\}\subset \bigcup_{i\geq i} A_i,\ \sum_i \P(A_i) \leq 1/2.\]
 Park and Pham's paper \cite{Pa-Ph} and our work \cite{my} were very combinatorial and based on the analysis of a set $\{i:\delta_i=1\}$. Interestingly, our argument from \cite{my}, with slight modifications, works for almost any canonical process. It can even be adapted to the non-i.i.d. case, which is shown by Theorem \ref{thm:wykl} below. Moreover, we show that small covers have practical applications. We have succeeded in using them to obtain a new comparison theorem for canonical processes (see Theorem \ref{thm:porow} below). This seems to be an evidence that small covers are not only a purely theoretical concept, but might serve as a powerful new tool in studying the expected suprema. We also conjecture that they can be used to derive new Sudakov-type bounds and show some results in this direction.

\noindent \textbf{Organization of the paper.} In Section \ref{sec:main} we state  the main results of this paper. In Theorem \ref{thm:wykl} we construct a small cover for non i.i.d. empirical processes (we also give an i.i.d. version of the theorem). Then we present the comparison theorem, which is a consequence of Theorem \ref{thm:wykl}. We show that the expectation of the supremum of the canonical process based on variables that are uniformly distributed on the generalized Orlicz balls (in particular on the $\ell_p$ balls) is controlled from the above by its independent copy. This can be seen as the most interesting result of this work. We also show that under additional assumptions on the r.v.'s there exist more natural small covers. In Section \ref{sec:app} we show that Sudakov-type bounds (in a 'toy' case) can be derived from the fact that small covers exist. We also prove the comparison result. In Section \ref{sec:pmain} we give a discrete version of Theorem \ref{thm:wykl} and show how it implies the non-discrete version and the Park and Pham result. Section \ref{sec:pdisc} is devoted to showing that small covers exist in the discrete case.  For the convenience of the reader, we collect definitions of all sets used in the proofs in the glossary, which is located before the bibliography.\\
\noindent \textbf{Notation.} Throughout this paper $\R_0=\R_+\cup \{0\}$. For the sake of notational consistency, when $x$ is a vector in $\R^d$, we denote its $i$-th coordinate by $x(i)$, instead of $x_i$.\\

\section{Main results}\label{sec:main}
The theorem below is the main result of this paper. For similar results concerning infinitely divisible processes, see \cite[Theorem 2.3]{my}.
\begin{theo}\label{thm:wykl}
 Fix $\delta\in (0,1)$, $K\geq 1$ and $T\subset \R_0^d$.
   Let $X_1,\ldots,X_d$ be i.i.d. non-negative random variables such that for some $C>1$ we have,
   \begin{equation}\label{eq:koncentracjaXniedyskretny}
\forall_{t \gs K\E X_i}\ \P(X_i\gs t)\gs C \P(X_i\gs 2t).
   \end{equation}
 Assume that $f_1,\ldots,f_d$ are strictly increasing, nonnegative functions defined on $\R_0$ such that for each $i\leq d$
\begin{align}
    \forall_{x\gs K\E X_1} f_i(2x) \leq C' f_i (x), \label{eq:efrosnie}\\
    \E f_i(X_1)\geq (C')^{-1} f_i(\E X_1), \label{eq:oczekdol}
\end{align}
for some $C'>1$.  Then there exists a constant $L=L(\delta,C,C',K)$ and a family $\ccG\subset \{(x_\ast,W):\ W\subset [d],\; x_\ast\in \R_0^{d}\}$ such that 
    \begin{align}
   & \forall_{(x_\ast,W) \in \ccG}\forall_{i\in W} x_\ast(i) \geq f_i(K\E X_i), \label{eq:swiadprzekex}\\
       & \ccF:=\left\{x\in \R^d_0:\sup_{t\in T} \sum_{i=1}^d t_i f_i(x(i)) \gs L \E \sup_{t\in T} \sum_{i=1}^d t_i f_i(X_i) \right\} \subset \bigcup_{(x_\ast,W)\in \ccG} \{x\in \R^d_0:\; \forall_{i\in W} f_i(x(i))\gs x_\ast(i)\},\label{eq:defef}\\
      &  \sum_{(x_\ast,W)\in \ccG} \P(\forall_{i\in W} f_i(X_i)\gs x_\ast(i) )\ls \delta \label{eq:locvenusaur2b}.
    \end{align}

\end{theo}
\noindent The above theorem fails for  $f_i \equiv 1$, $i=1,\ldots,d$. Thus, the assumption that $f_i$ are strictly increasing is essential.
\begin{rema}\label{rem:mniejwar}
Theorem \ref{thm:wykl} can be proved with \eqref{eq:koncentracjaXniedyskretny}  replaced by  
\[\forall_{t\gs K\E X_i }\ \P(X_i\gs t )\gs C \P(X_i\gs M t)\]
for some constant $M$. For the argument see Remark \ref{rem:zmiendow}.
\end{rema}
\noindent The condition \eqref{eq:koncentracjaXniedyskretny} ensures that the the function $t \mapsto \P(X_i \geq t)$ decays polynomially fast. This is far from being a concentration condition, since we can have $\P(X\gs t)=t^{-\alpha}$ for some $\alpha >1$ (to ensure that the expectation is well defined). Thus, the condition \eqref{eq:koncentracjaXniedyskretny} is undemanding. On the other hand, it fails for Bernoulli random variables. This case is covered by the discrete version of Theorem \ref{thm:wykl}, which we introduce in Section \ref{sec:pmain}.\\
A version of Theorem \ref{thm:wykl} for $i.i.d$ r.v.'s can be easily derived since $f(x)=x$ satisfies \eqref{eq:efrosnie},\eqref{eq:oczekdol} with $C'=2$ (where \eqref{eq:oczekdol} even holds for $C'=1$).
\begin{theo}\label{thm:konc}
Fix $\delta\in (0,1)$ and  $T\subset \R_0^d$.
   Let $X_1,\ldots,X_d$ be i.i.d. non-negative random variables satisfying \eqref{eq:koncentracjaXniedyskretny}. 
Then there exists a constant $L=L(\delta,C)$ and a family $\ccG\subset \{(x_\ast,W): \ x_\ast \in \R^d_0,\ W\subset [d]\}$ such that 
    \begin{align}
      & \forall_{(x_\ast,W) \in \ccG}\forall_{i\in W} x_\ast(i) \geq K\E X_i, \nonumber\\ %\label{eq:locprzekr} 
        &\ccF:=\left\{x\in \R^d_0:\sup_{t\in T} \sum_{i=1}^d t_i x(i) \gs L \E \sup_{t\in T} \sum_{i=1}^d t_i X_i \right\} \subset \bigcup_{(x_\ast,W)\in \ccG} \{x\in \R^d_0:\; \forall_{i\in W} x(i)\gs x_\ast(i)\},\label{eq:locbulbasaur}\\
        &\sum_{(x_\ast,W)\in \ccG} \P(\forall_{i\in W} X_i\gs x_\ast(i) )\ls \delta . \nonumber %\label{eq:locvenusaur}
    \end{align}
\end{theo}
\noindent Condition \eqref{eq:locbulbasaur} immediately implies  that
\[\left\{\sup_{t\in T} \sum_{i=1}^d t_i X_i \gs L \E \sup_{t\in T} \sum_{i=1}^d t_i X_i \right\} \subset \bigcup_{(x_\ast,W)\in \ccG} \{\forall_{i\in W} X_i\gs x_\ast(i)\}.\]
 Intuitively, we select only those variables that significantly affect the value of the process. Roughly speaking, Theorem \ref{thm:konc} says that we can specify these $X_i$'s (witnesses) which, if appropriately large, make the whole process large (exceeding a multiple of its mean). \\
Theorem \ref{thm:wykl} may look like mere theoretical curiosity. Surprisingly, we were able to use it to obtain a completely new comparison principle for suprema of canonical processes based on variables that are uniformly distributed on generalized Orlicz balls. This is one of the very few comparison principles for random variables that are dependent.  
\begin{theo}[Comparison principle for generalized Orlicz balls]\label{thm:porow}
There exists a numerical constant $L>0$ with the following property.  Let $f_1,\ldots,f_d:\R_0\to \R_0$ be convex, increasing functions such that $f_i(0)=0$. Let 
 \[B:=\{x\in \R^d_0: \sum_{i=1}^d f_i(x(i)) \leq 1 \}.\]
 Let $X=(X_1,\ldots,X_d)$ be a random vector, uniformly distributed on the convex body $B$. Let $X'_1,\ldots,X'_d$ be independent random variables such that for each $i\leq d$, $X_i \stackrel{D}{=} X'_i$. Then for any $T\subset \R^d_0$ we have
 \[\E \sup_{t\in T} \sum_i t_i X_i \leq L \E \sup_{t\in T} \sum_i t_i X'_i.\]
\end{theo}
\begin{rema}
Theorem \ref{thm:porow} remains true under the assumption that the set $T$ is unconditional (i.e. if $(t_1,\ldots,t_d)\in T$ then $(\pm t_1,\ldots, \pm t_d) \in T$), while the functions  $f_1,\ldots,f_d:\R \to \R_0$ are symmetric, and their restrictions to $\R_0$ are convex and increasing. It is sufficient to note that
\[\E \sup_{t\in T} \sum t_i X_i= \E \sup_{t \in T} \sum_i |t_i| |X_i|, \]
and apply Theorem \ref{thm:porow}.
\end{rema}
\noindent Theorem \ref{thm:wykl} is not restricted to i.i.d. variables, so our comparison result is not restricted to permutational invariant bodies. This level of generality is quite surprising, since such a comparison result was unknown even for the $\ell_p$ balls. This shows the usefulness of our Theorem \ref{thm:wykl}. \\
We suspect that Theorem \ref{thm:wykl} can also be used to derive some Sudakov-type bounds for generalized Orlicz balls. Such bounds say that if we have a stochastic process $X_t$ indexed by the set $T$ containing 'sufficiently many' elements that are 'well' separated then $\E \sup_t X_t$ is 'large'. Optimal bounds are known for canonical processes (i.e. $T\subset \R^d$ and $X_t=\sum t_i X_i$, where $X_1,\ldots,X_d$ are r.v.'s) based on independent r.v.'s, with logarithmically concave tails \cite{Lat1}. In the case of dependent random variables only suboptimal results are known, e.g. \cite[Theorem 4.3]{Latzal}. Optimal bounds are even unknown if $X_t=\sum t_i X_i$ and $X=(X_1,\ldots,X_d)$ is uniformly distributed on the $\ell_p$ ball.
The first premise that Theorem \ref{thm:wykl}  may imply Sudakov-type bounds is that we were able to derive Theorem \ref{thm:porow} from it. The second one is that we deduced a Sudakov-type bound in the 'toy case' of an independent exponential random variable, see Section \ref{sec:app}.\\
Under additional assumptions on the random variables, we can construct a more natural cover in terms of linear functionals. In the theorem below, $\langle \cdot , \cdot \rangle$ stands for the standard scalar product in $\R^d$.
\begin{theo}\label{thm:logwklesle}
   There exists a numerical constant $L>0$ with the following property. Suppose $X_1,\ldots,X_n$ are non-negative i.i.d. random variables such that the function $N(t):=-\ln \P(X_i\geq t)$ is convex.
    Then, there exists a countable set $\ccG\subset\R^d$ for which
    \begin{align*}
        &\left\{x\in \R^d: \sup_{t\in T} \sum_{i=1}^d x(i)t_i \gs L \E \sup_{t\in T} \sum_{i=1}^d X_i t_i\right\} \subset \bigcup_{w\in \ccG} \left\{x\in \R^d: \langle x, w\rangle \gs 1 \right\},\\
      &  \sum_{w\in \ccG} \P\left( \langle (X_1,\ldots,X_d) , w \rangle \gs 1 \right) \leq 1/2.
    \end{align*}
  \end{theo}
  \noindent
 We don't know if Theorem \ref{thm:logwklesle} is true without additional assumptions about the distribution. In particular, such a theorem cannot be deduced from Theorem \ref{thm:wykl}. Surely, for $W\subset [d]$, $x_\ast\in \R^d_0$ 
\[ \{x\in \R^d_0:\; \forall_{i\in W} x(i)\gs x_\ast(i)\} \subset \left\{x\in \R^d_0: \sum_{i\in W} x(i)  \gs \sum_{i\in W}  x_\ast(i) \right\},\]
but it may happen that
\[ \P\left(\sum_{i\in W} X_i \gs \sum_{i\in W} x_\ast(i)\right)\gg \P(\forall_{i\in W} X_i\gs x_\ast(i) ).\]
For example, if $X_1,X_2$ are independent $U[0,1]$ random variables. Then for small $\va>0$
\[\P(X_1\gs 1-\va,X_2 \gs 3/4) \ls \va \ll \P(X_1+X_2 \gs 7/4) \ls \P(X_1+X_2 \gs 7/4-\va).\]

\section{Application and Proofs}\label{sec:app}
We show that Theorems \ref{thm:wykl} and \ref{thm:konc} can provide lower bounds on the expectations of the suprema of stochastic processes. The following fact is part of Talagrand's proof of the Sudakov-type bounds for the exponential process (see  \cite[Proposition 4.3]{TG2}). It is well known and can be proved  by  standard arguments from probability theory. However, our goal is to show that small covers are suitable for deriving Sudakov-type bounds. 
\begin{fak}
    Let $X_1,X_2,\ldots$ be nonnegative i.i.d. r.v.'s with CDF given by $\P(X_1 \gs t)=exp(-t)$. Then 
    \[\E \sup_{|I|\subset [d], |I|= k} \sum_{i\in I} X_i \geq \frac{k}{C}\ln(d/(ek)).\]
\end{fak}
\begin{proof}
%It is enough to prove the fact for $m=2^k$, $k=1,2,\ldots,\lfloor \ln d \rfloor$.
Let $p>0$ be defined by
    \begin{equation}\label{eq:wybp}
      e^{p} = \left( \frac{d}{ek}\right)^{k} .
      \end{equation}
      Assume that there is a family $\ccG$ of subsets of $\R^d_0 \times [d]$ such that
      \begin{align}
          \ccF:=\{x\in \R^d_0 \sup_{|I|\subset [d], |I|= k} \sum_{i\in I} x(i) \gs p  \} \subset \bigcup_{(x_\ast,W) \in \ccG} \{ x\in \R^d_0: \forall_{i\in W} x(i) \gs x_\ast(i)>0\}, \label{eq:zaw}
        % \sum_{(x_\ast,W) \in \ccG}e^{-\sum_{i\in W}x_\ast(i)} =\sum_{(x_\ast,W) \in \ccG}  \P(\forall_{i\in W} X_i \gs x_\ast(i))\leq \frac{1}{2}.\label{eq:suma}
      \end{align}
      To shorten the notation we define $\langle (x_\ast,W) \rangle:=\{ x\in \R^d_0: \forall_{i\in W} x(i) \gs x_\ast(i)>0\}$. Let $I\subset [d], \ |I|=k$ and $e_1,\ldots,e_d$ be standard base of $\R^d$. Then trivially 
      \[x_I:=p/k \sum_{i\in I} e_i \in \ccF\subset \bigcup_{(x_\ast,W)\in \ccG} \langle (x_\ast,W) \rangle.\]
We are only interested in those elements of $\ccG$ that covers $\{x_I: I\subset [d], |I|=k\}$. We define
\begin{align*}
   \ccG'&:=\{(x_\ast,W) \in \ccG: \textrm{exists }I\subset [d],\ |I|=k \textrm{ such that }\ x_I\in \langle (x_\ast,W) \rangle \}, \\
   \ccG'_l&:=\{(x_\ast,W) \in \ccG': |W|=l\}.
\end{align*}
In particular, if $(x_\ast,W) \in \ccG'$ then for some $I\subset [d]$ and $|I|=k$ we have
\begin{equation}\label{eq:osznormx}
 \forall_{i \in W} 0<x_\ast(i) \leq x_I(i) \ls p/k.   
\end{equation}
Also if $(x_\ast,W)\in \ccG'$ then $x_I \in \langle (x_\ast,W) \rangle$ if and only if $W\subset I$. In particular $\ccG'_l=\emptyset$ for $l>k$ and
\begin{align*}
\binom{d}{k}&=|\{I\subset [d]: |I|=k\}|\ls \sum_{l=0}^k \sum_{(x_\ast,W) \in \ccG'_l} |\{I\subset [d]: |I|=k,\ x_I \in \langle (x_\ast,W) \rangle  \}|  \\
&=\sum_{l=0}^k \sum_{(x_\ast,W) \in \ccG'_l} |\{I\subset [d]: |I|=k,\ W \subset I \}|=\sum_{l=0}^k |\ccG'_l|\binom{d-l}{k-l}. 
\end{align*}
We divide the above inequality by the left-hand side and then apply Sauer's Lemma to get that
\begin{align*}
    1 \ls \sum_{l=0}^k |\ccG'_l|\frac{\binom{d-l}{k-l}}{\binom{d}{k}}= \sum_{l=0}^k |\ccG'_l|\frac{\binom{k}{l}}{\binom{d}{l}}\ls \sum_{l=0}^k |\ccG'_l| \left(\frac{ek}{d} \right)^l = \sum_{l=0}^k |\ccG'_l| e^{-p\frac{l}{k}},
\end{align*}
where we recall \eqref{eq:wybp}.On the other hand, since $\ccG'\subset \ccG$, by  \eqref{eq:osznormx}
\begin{align*}
\sum_{(x_\ast,W)\in \ccG} \P(\forall_{i \in W} X_i \gs x_\ast(i))&=\sum_{(x_\ast,W)\in \ccG} e^{-\sum_{i\in W}x_\ast(i)}\gs \sum_{(x_\ast,W)\in \ccG'} e^{-\sum_{i\in W}x_\ast(i)} \\
&\gs \sum_{(x_\ast,W)\in \ccG'} e^{-\frac{p}{k} |W|} =  \sum_{l=0}^k |\ccG'_l|e^{-p\frac{l}{k}}\gs 1.  
\end{align*}
Thus, whenever $\ccG$  satisfies \eqref{eq:zaw}, it also satisfies the above. So there is no small cover of $\ccF$ (see \eqref{eq:zaw}). So Theorem \ref{thm:konc} implies (exponential r.v.'s satisfy \eqref{eq:koncentracjaXniedyskretny} with $C=e$) that 
\[\E \sup_{I\subset[d], |I|=k}\sum_{i\in I} X_i \gs   \frac{p}{C}.\]
\end{proof}
\noindent Now we will prove Theorem \ref{thm:porow}. We begin with some technical facts. 

\begin{fak}\label{fact:logwklesle}
    Let $X=(X_1,\ldots,X_d)$ be a random vector uniformly distributed on a convex body $B$. Define $N_i(t):=-\ln \P(X_i \geq t)$, $s_i:=\inf\{x(i): (x_1,\ldots,x_d)\in B\}$ and $m_i:=\sup \{x(i): (x_1,\ldots,x_d)\in B\}$. Then for each $i\leq d$ the function $N_i$ is strictly increasing and convex on $[s_i,m_i)$. Also $N_i(s_i)=0$ and $\lim_{x \to m_i} N_i(x)=\infty$.
\end{fak}
\begin{proof}
The only non-trivial part of the assertion is the convexity of $N_i$ (in particular $N_i$ is strictly increasing since $B$ is convex). Observe that for any $\lambda \in [0,1]$ and $t,s \in \R$ we have 
    \[ \lambda \{x\in \R^d: x(i) \geq t\} + (1-\lambda) \{x\in \R^d: x(i) \geq s\}= \{x\in \R^d: x(i) \geq \lambda t+ (1-\lambda) s\},  \]
    where in the above we have Minkowski's addition. The measure $\mu$ is  log-concave (see \cite[Example 2.1.3]{iso}) thus,
    \[\P(X_i \geq \lambda t +(1-\lambda)s) \geq \P(X_i\geq t)^\lambda \P(X_i\geq s)^{1-\lambda}. \]
    It is enough to take $\ln$ on both sides and recall the definition of $N_i$.
\end{proof}

\begin{lema}\label{lem:mom_por}
    Assume that the vector $(X_1,\ldots,X_d)$ has log-concave distribution. Then for any semi-norm $\lv \cdot \rv$ on $\R^d$ the following inequality holds
    \[\sqrt{\E \lv (X_1,\ldots,X_d) \rv^2 } \leq \beta_2 \E \lv (X_1,\ldots,X_d) \rv.\]
\end{lema}
\begin{proof}
   This is a simple consequence of \cite[Theorem 1]{latmomlog} and Jensen's inequality.
\end{proof}
\noindent The pivotal property of the uniform distributions on generalized Orlicz balls is that they are negatively associated. For the  convenience of the reader, the definition is given below.
\begin{defi}\label{def:na}[see \cite[Section 1.1]{Wojt} ]
    A random vector $(X_1,\ldots,X_d)$ is said to be negatively associated if for any coordinate-wise increasing, bounded functions $f,g$ and disjoint sets $\{i_1,\ldots,i_k\},\{j_1,\ldots,j_l\} \subset [d]$ we have
\[\mathrm{Cov}(f(X_{i_1},\ldots,X_{i_k}\}),g(X_{j_1},\ldots,X_{j_l})) \leq 0.\]
\end{defi}
\begin{rema}\label{rem:oszprawd}
    If a random vector $(X_1,\ldots,X_d)$ is negatively associated  then for any $\{i_1,\ldots,i_k\}\subset [d]$ and any $x_1,\ldots,x_l \in \R$
    \[\P(X_{i_1} \geq x_1, \ldots, X_{i_k} \geq x_k) \leq \P(X_{i_1} \geq x_1) \cdots \P(X_{i_k} \geq x_k).  \]
    In particular, if $X'_1,\ldots,X'_d$ are independent random variables such that for any $i\leq d$, $X_i \stackrel{D}= X'_i$, then
    \[\P(X_{i_1} \geq x_1, \ldots, X_{i_k} \geq x_k)\leq \P(X'_{i_1} \geq x_1, \ldots, X'_{i_k} \geq x_k).  \]
\end{rema}

\begin{proof}[Proof of Theorem \ref{thm:porow}]
Fix $i\leq d$ and consider $N_i(t)=-\ln \P(X_i \geq t)$ and $s_i,m_i$ as in Fact \ref{fact:logwklesle}. In our case $s_i=0$.  Let $F_i(x):[0,\infty) \to [0,m_i)$ be the inverse of the function $N_i$. By Fact \ref{fact:logwklesle} every $F_i$ is strictly increasing, concave (since $N_i$ is convex), and $F_i(0)=0$. In particular
\begin{equation}\label{eq:bijekcja}
 (x_1,\ldots,x_d) \mapsto (F_1(x_1),\ldots,F_d(x_d))   
\end{equation}
is a bijection  between $[0,\infty)^d$ and $[0,m_1) \times \cdots \times [0,m_d)$, and (by concavity)
\begin{equation}\label{eq:sprawdzwar}
   \forall_{i\leq d} \forall_{x>0} F_i(2x)=F_i(2x)+F_i(0) \leq 2F_i(x).
\end{equation}
Let $\ccE_1,\ldots,\ccE_d$ be i.i.d. non-negative exponential r.v.'s ($\ccE_i$ has the density $g(x)=\1_{x>0} e^{-x}$). Simple computations show that ($N_i(t)=-\ln \P(X'_i \geq t)$ since $X_i \stackrel{D}{=} X'_i$)
\[(F_1(\ccE_1),\ldots,F_d(\ccE_d)) \stackrel{D}{=} (X'_1,\ldots,X'_d).\]
For each $i\leq d$ the function $F_i$ is non-negative and increasing so we have
\[\E F_i(\ccE_i) \geq \P(\ccE_i \geq 1) F_i(1)= e^{-1} F_i(\E \ccE_i). \]
Thus, \eqref{eq:oczekdol} holds with $C'=e^{-1}$ and \eqref{eq:efrosnie} holds with $C'=2\leq e$ (recall \eqref{eq:sprawdzwar}). Trivially $\ccE_1,\ldots,\ccE_d$ satisfy condition \eqref{eq:koncentracjaXniedyskretny} with $C=e$. Thus, the assumptions of Theorem \ref{thm:wykl} are satisfied so there exists a family $\ccG \subset \{(x_\ast,W): x_\ast\in \R^d_0,\ W\subset [d] \}$ for which
\begin{align}
        &\ccF:=\left\{x\in \R^d_0:\sup_{t\in T} \sum_{i=1}^d t_i F_i(x(i)) \gs L \E \sup_{t\in T} \sum_{i=1}^d t_i F_i(\ccE_i) \right\} \subset \bigcup_{(x_\ast,W)\in \ccG} \{x\in \R^d_0:\; \forall_{i\in W} F_i(x(i))\gs x_\ast(i)\},\label{eq:lokal}\\
     & \sum_{(x_\ast,W)\in \ccG} \P(\forall_{i\in W} F_i(\ccE_i)\gs x_\ast(i) )\ls \delta:=\frac{1}{4\beta_2} \label{eq:lokal1},
    \end{align}
    where $\beta_2$ is the constant from Lemma \ref{lem:mom_por}. By \cite[Theorem 1.2]{Wojt} the vector $(X_1,\ldots,X_d)$ is negatively associated (see Definition \ref{def:na}). So by Remark \ref{rem:oszprawd} and \eqref{eq:lokal1}
    \begin{align} \nonumber
\sum_{(x_\ast,W)\in \ccG} \P(\forall_{i\in W} X_i\gs x_\ast(i) ) &\leq \sum_{(x_\ast,W)\in \ccG} \P(\forall_{i\in W} X'_i\gs x_\ast(i) )\\
&=\sum_{(x_\ast,W)\in \ccG} \P(\forall_{i\in W} F_i(\ccE_i)\gs x_\ast(i) )  \leq \frac{1}{4\beta_2}.\label{eq:chopin1}
         \end{align}
    Since the mapping \eqref{eq:bijekcja} is a bijection,   formula \eqref{eq:lokal} implies that
         \begin{multline}\label{eq:chopin2}
\left\{x\in [0,m_1) \times \cdots \times [0,m_d) :\sup_{t\in T} \sum_{i=1}^d t_i x(i) \gs L \E \sup_{t\in T} \sum_{i=1}^d t_i F_i(\ccE_i) \right\} \\
\subset \bigcup_{(x_\ast,W)\in \ccG} \{x\in [0,m_1) \times \cdots \times [0,m_d):\; \forall_{i\in W}x(i) \gs x_\ast(i)\}.
         \end{multline}
         By \eqref{eq:chopin1} and \eqref{eq:chopin2} we get (recall that $(F_1(\ccE_1),\ldots,F_d(\ccE_d))\stackrel{D}{=}(X'_1,\ldots,X'_d)$)
         \[\P\left(\sup_{t\in T} \sum_{i=1}^d t_i X_i \gs L \E \sup_{t\in T} \sum_{i=1}^d t_i X'_i\right)\leq \frac{1}{4\beta_2}. \]
         On the other hand, by Paley-Zygmund's inequality and Lemma \ref{lem:mom_por}
         \[\P\left(\sup_{t\in T} \sum_{i=1}^d t_i X_i \gs \frac{1}{2}  \E \sup_{t\in T} \sum_{i=1}^d t_i X_i\right) \gs \frac{1}{4} \frac{\left(\E \sup_{t\in T} \sum_{i=1}^d t_i X_i\right)^2 }{\E \sup_{t\in T} \left(\sum_{i=1}^d t_i X_i\right)^2} \geq  \frac{1}{4\beta_2}.\]
        The above computations yield
         \[ L \E \sup_{t\in T} \sum_{i=1}^d t_i X'_i \geq \frac{1}{2} \E \sup_{t\in T} \sum_{i=1}^d t_i X_i.\]
\end{proof}

\noindent We conclude this section with the proof of Theorem \ref{thm:logwklesle}. We begin with a technical lemma.
\begin{lema}\label{lem:formulamomenty}
Suppose $N(t):[0,\infty]\to [0,\infty]$ is convex function and $N(0)=0, N(1)=1$. Let $x_1,\ldots,x_d \gs 1$ and consider $p=\sum_{i\leq d} N(x(i))$. Then
\[\sup\left\{\sum_{i\leq d} \frac{N(x(i))}{x(i)} (1+b_i) \mid \sum_{i\leq d} N(b_i) \leq p \right\} \leq 3p.\]
\end{lema}
\begin{proof}
 Fix $b_1,\ldots,b_d \in \R^d_0$ such that $\sum_{i\leq d} N(b_i)\leq p$. Define $N^\ast(t)=\sup_{s\geq 0} st-N(s)$. Since $x_1\ldots,x_d \gs 1$
 \begin{equation}\label{eq:loc98}
  \sum_{i\leq d} \frac{N(x(i))}{x(i)} (1+b_i) \leq \sum_{i\leq d} N(x(i)) +\sum_{i\leq d} N^\ast \left(\frac{N(x(i))}{x(i)} \right) +\sum_{i\leq d} N(b_i) \leq \sum_{i\leq d} N^\ast \left(\frac{N(x(i))}{x(i)} \right)+2p.    
 \end{equation}
By definition
 \[ N^\ast \left(\frac{N(x(i))}{x(i)} \right)=\sup_{s\gs 0} s\left(\frac{N(x(i))}{x(i)}-\frac{N(s)}{s}\right)=:\sup_{s\gs 0} s F(s). \]
 Due to the convexity of $N$, the function $F(s)$ is non-positive for $s>x(i)$ and non-negative for $s \leq x(i)$.  Thus the supremum is attained for some $s(x(i))\leq x(i)$. As a result
 \[N^\ast \left(\frac{N(x(i))}{x(i)} \right)=\frac{N(x(i))}{x(i)}\cdot s(x(i))-N(s(x(i))) \ls N(x(i)).\]
 Plugging the above into \eqref{eq:loc98} yields the assertion.
\end{proof}

   \begin{proof}[Proof of Theorem \ref{thm:logwklesle}]
   W.l.o.g. we may assume that $N(1)=1$. In particular $\E X_i \gs \P(X_i \gs 1)=1/e$.  Since $N$ is convex and $N(0)=0,\ N(1)=1$ we get for $t\gs 1$ that  $N(2t)\gs 2N(t)\gs N(t)+1$. Thus, \eqref{eq:koncentracjaXniedyskretny} holds with $C=e$ and Theorem \ref{thm:konc}, with $\delta=1/2$ and $K=e$ (so that $K\E X_i=1$), implies that there exists set $\ccG\subset \R^d\times 2^{[d]}$ such that
   \begin{align}
       &\left\{x\in \R^d_0:\sup_{t\in T} \sum_{i=1}^d x(i)t_i \gs L \E \sup_{t\in T} \sum_{i=1}^d t_i X_i\right\} \subset \bigcup_{(x_\ast,W)\in \ccG}\{x\in \R^d_0: \forall_{i\in W} x(i) \gs x_\ast(i)\}\label{eq:locpokemon},\\
       \label{wysumuj}&\sum_{(x_\ast,W)\in \ccG} \P(\forall_{i\in W}X_i \gs x_\ast(i)) \ls 1/2,\\
    \label{eq:oszcdolu}   &\forall_{(x_\ast,W)\in \ccG} \forall_{i\in W} x_\ast(i)\gs 1.
   \end{align}
Let $L'$ be a large constant to be chosen later. Trivially, from \eqref{eq:locpokemon}
   \begin{align*}\nonumber
    \left\{x\in \R^d_0:\sup_{t\in T} \sum_{i=1}^d x(i)t_i \gs LL' \E \sup_{t\in T} \sum_{i=1}^d t_i X_i\right\} &\subset \bigcup_{(x_\ast,W)\in \ccG}\{x\in \R^d_0: \forall_{i\in W} x(i) \gs L' x_\ast(i)\}\\
  &\subset  \bigcup_{(x_\ast,W)\in \ccG}\left\{x\in \R^d_0: \sum_{i\in W} \frac{N(x_\ast(i))}{x_\ast(i)}x(i) \geq L' p(x_\ast,W)\right\},  %\label{eq:loc76}
   \end{align*}
where 
\begin{equation*}%\label{eq:locdefp}
 p(x_\ast,W):=\sum_{i\in W} N(x_\ast(i))=-\ln \P(\forall_{i\in W}X_i \gs x_\ast(i)).   
\end{equation*}
It remains to show that 
\[\P\left(\sum_{i\in W} \frac{N(x_\ast(i))}{x_\ast(i)}X_i\gs L'p(x_\ast,W)\right) \leq e^{-p(x_\ast,W)}=\P(\forall_{i\in W}X_i \gs x_\ast(i)),\]
and use \eqref{wysumuj}. From \eqref{eq:oszcdolu}
   \[p(x_\ast,W)=\sum_{i\in W} N(x_\ast(i)) \gs |W|N(1)=|W|\gs 1.\]
By  \cite[Theorem 2.1]{latloch} (cf. formula $(3.1)$ therein) and Lemma \ref{lem:formulamomenty} (from \eqref{eq:oszcdolu}, $\forall_{i \in W}$ $x_\ast(i) \geq 1$)
   \[\lv \sum_{i\in W}\frac{N(x_\ast(i))}{x_\ast(i)}X_i \rv_{p(x_\ast,W)} \leq \kappa \sup\left\{\sum_{i\in W} \frac{N(x_\ast(i))}{x_\ast(i)}(1+b_i) \mid \sum_{i\in W} N(b_i)\leq p(x_\ast,W) \right\}\ls 3\kappa  p(x_\ast,W),\]
where $\kappa>0 $ is a numerical constant. So Markow's inequality yields for $L'=3e\kappa$
\begin{align*}
\P\left(\sum_{i\in W}  \frac{N(x_\ast(i))}{x_\ast(i)}X_i\gs L'p(x_\ast,W)\right) &\leq \left( \frac{\lv \sum_{i\in W}  \frac{N(x_\ast(i))}{x_\ast(i)}X_i \rv_{p(x_\ast,W)}}{L'p(x_\ast,W)} \right)^{p(x_\ast,W)}\\
&\ls e^{-p(x_\ast,W)}.    
\end{align*}
 %  Observe that
 %  \[\left\{x\in \R^d_0: \forall_{i\in W} x(i) \gs L' x_\ast(i)\right\}\subset \left\{x\in \R^d_0: \sum_{i\in W} \frac{N(x_\ast(i))}{x_\ast(i)}x(i) \geq L' p\right\},\]
 %  so  by \eqref{eq:loc76}
 %  \[    \left\{x\in \R^d_0:\sup_{t\in T} \sum_{i=1}^d \gs LL' \E \sup_{t\in T} \sum_{i=1}^d t_i X_i\right\}\subset \bigcup_{(x_\ast,W)\in \ccG} \left\{x\in \R^d_0: \sum_{i\in W} \frac{N(x_\ast(i))}{x_\ast(i)}x(i) \geq L' p\right\}.\]

  \end{proof}

\section{Proof of Theorem  \ref{thm:wykl}}\label{sec:pmain}
We start with a language change. Let $X_1,\ldots,X_d$ be i.i.d. random variables taking  values in $[n]=\{1,2,\ldots,n\}$ and strictly increasing functions $f_i:[n]\to \R_0$ such that $f_i(1)=0$ for each $i\leq d$. To shorten the notation we set
\[S(T):=\E \sup_{t\in T} \sum_{i=1}^d t_i f_i(X_i).\]
For $x\in [n]^d$ we define $t^{x,f}\in T$ as
\[\sum_{i=1}^d t^{x,f}_if_i(x(i))=\sup_{t\in T} \sum_{i=1}^d t_if_i(x(i)).  \]
When it is not confusing, we will abbreviate $t^{x,f}$ to $t^x$.  By definition,
\[S(T)=\E \sum_{i=1}^d t^X_i f_i(X_i)=\E \sup_{x\in [n]^d} \sum_{i=1}^d t^x(i)f_i(X_i).\]

\begin{defi}\label{def:cover}
   Let $L>0$ and 
   \begin{equation}\label{eq:deff}
      \ccF:=\left\{x\in [n]^d: \sup_{t\in T} \sum_{i=1}^d t_i f_i(x(i)) \gs L \right\}. 
   \end{equation}
 We say that  $\ccG\subset \{(x_\ast,W):x_\ast \in \{2,\ldots,n\}^d, W\subset [d]\}$  is a  cover of $\ccF$ if for any $x\in \ccF$ there exists $(x_\ast,W)\in \ccG$ such that for all $i\in W$ $x(i)\gs x_\ast(i)$. Equivalently $\forall_{i\in W}$ $f_i(x(i))\gs f_i(x_\ast(i))>0$ (for any $i\leq d$, $f_i$ is strictly increasing and $f_i(1)=0$). In other words
       \[\ccF\subset \bigcup_{(x_\ast,W)\in \ccG} \{x\in [n]^d: \; \forall_{i\in W}\; x(i) \gs x_\ast(i)>1\}=\bigcup_{(x_\ast,W)\in \ccG} \{x\in [n]^d: \; \forall_{i\in W}\; f_i(x(i)) \gs f_i(x_\ast(i))>0\}. \]
    \end{defi}
\noindent The reason why we require $x_\ast(i)>1$ ($f_i(x_\ast(i))>0$) is explained after the next definition.
\begin{rema}
 It would be more natural to define a cover $\ccG \subset \{(x_\ast,W): x_\ast\in \R_0, W\subset [d] \}$ as a family that satisfies
 \[\ccF\subset \bigcup_{(x_\ast,W)\in \ccG} \{x\in [n]^d: \; \forall_{i\in W}\; f_i(x(i)) \gs x_\ast(i)\}.\]
 But then we would have to write expressions like $\lfloor f^{-1}(x_\ast(i)) \rfloor$. Thus, we chose a less natural definition that leads to simpler formulas.
\end{rema}

\begin{defi}\label{def:smallfam}
 Let $\delta< 1$. We  say that $\ccF\subset [n]^d$ given by \eqref{eq:deff} is $\delta$-small if there exists a cover $\ccG$ (in the sense of Definition \ref{def:cover}) such that
 \begin{equation}\label{eq:locnidorino}
\sum_{(x_\ast,W)\in \ccG} \P(\forall_{i\in W} \;  X_i= x_\ast(i))=\sum_{(x_\ast,W)\in \ccG} \P(\forall_{i\in W} \;  f_i(X_i)= f_i(x_\ast(i)))\ls \delta,
 \end{equation}
  where the equality holds since for for $i\leq d$, $f_i$ is strictly increasing.
\end{defi}
\noindent  Let $\ccG$ be a cover of $\ccF$ in the sense of the Definition \ref{def:cover}. In particular
\[\ccF\subset \bigcup_{(x_\ast,W)\in \ccG} \{x\in [n]^d: \; \forall_{i\in W}\; x(i) \gs x_\ast(i)>1\}. \]
Assume for a moment that we don't require $x_\ast(i)>1$ for every $(x_\ast,W)\in \ccG$ and every $i\in W$. Fix $(x_\ast,W)\in \ccG$ and define
\[\hat{x}(i)=\begin{cases}
x_\ast(i) &i\in W \\ 1 & i\notin W.
    \end{cases}
\]
Consider $\hat{\ccG}:=\{(\hat{x},[n]):(x_\ast,W)\in \ccG\}$ ($\hat{x}$ is a function of $(x_\ast,W)$). Then trivially
\[\ccF\subset \bigcup_{(\hat{x},[n])\in \hat{\ccG}} \{x\in [n]^d: \forall_{i\in [n]} \  x(i)\gs \hat{x}(i)\}.\]
Also
\[ \sum_{(x_\ast,W)\in \ccG} \P(\forall_{i\in W} \;  X_i= x_\ast(i)) \gs  \sum_{(\hat{x},[n])\in \hat{\ccG}} \P(\forall_{i\in [n]} \  X_i= \hat{x}(i))\]
So $\hat{\ccG}$ would be a better cover of $\ccF$ than $\ccG$. But it doesn't provide any additional information. For this reason, we impose the condition that  $\forall_{(x_\ast,W)\in \ccG}\forall_{i\in W} x_\ast(i)>1$.

\noindent Now we state a discrete version of Theorem \ref{thm:wykl} using the introduced notation.
\begin{theo}\label{thm:mainjezkoncentracja}
Let $\delta\in (0,1)$,\ $K\gs \lceil 12e/\delta \rceil$ be a natural number, $X_1,\ldots,X_d$ be i.i.d. random variables with values in $[n]$ such that $\P(X_i=1) \gs 1-1/K$. Assume that for each $i=1,\ldots,d$ $f_i:[n]\to \R_0$ is a strictly increasing function and $f_i(1)=0$.  Then for any $K'>4K$ the family 
\[\ccF:=\left\{x\in [n]^d: \sup_{t\in T}   \sum^d_{i=1} t_i f_i(x(i)) \gs K'S(T) \right\}\]
is  $\delta$-small. 
\end{theo}
\noindent We have three remarks about Theorem \ref{thm:mainjezkoncentracja}. The first one is that it is enough to prove it for $\hat{X}_i=X_i \1_{X_i \geq K \E X_i}$ instead of $X_i$. So the condition that $\P(X_i=1) \gs 1-1/K$ does not reduce the generality of the theorem.  The argument is used in the proof of Theorem \ref{thm:wykl} below.\\
\noindent Second, Theorem \ref{thm:mainjezkoncentracja} goes  beyond the i.i.d. case. Instead of considering i.i.d. random variables $X_1,\ldots,X_d$, we consider $Y_1,\ldots,Y_d$ where $Y_i=f_i(X_i)$ and $f_1,...,f_d$ are (possibly) different functions. This is still weaker than considering arbitrary, independent non-negative random variables $Y_1,\ldots,Y_d$. For such a  case, our argument fails in exactly one, but a crucial place. It is inequality \eqref{eq:loc2} in which we compute the probability of a certain event. In the general case, the formula would be too complicated to lead to any non-trivial bounds.\\
\noindent Finally, Theorem \ref{thm:mainjezkoncentracja}  implies the Park-Pham result \eqref{eq:selektory1}. Let $X_1,\ldots,X_d$ be i.i.d. Bernoulli random variables with the probability of success equal to $p>0$. Theorem \ref{thm:mainjezkoncentracja} implies that there exists a $1/2$-small cover  $\ccG\subset \{(x_\ast,W): x_\ast\equiv 1, W\subset [d] \}$  of
\[\ccF=\{x\in \{0,1\}^d: \sup_{t\in T} \sum_{i=1}^d t_i \delta_i \gs K' \delta(T) \}\]
(by a small abuse of notation we consider $x\in \{0,1\}^d$ instead of  $x\in \{1,2\}^d$).
By definition of a $1/2$-small cover
\begin{align*}
 \ccF&\subset \bigcup_{(x_\ast,W)\in \ccG} \{x\in \{0,1\}^d: \forall_{i\in W} x(i)=1 \},\\
1/2 &\gs  \sum_{(x_\ast,W)\in \ccG} \P( \forall_{i\in W}\  X_i=1)= \sum_{(x_\ast,W)\in \ccG} p^{|W|},
\end{align*}
which is exactly the result of Park and Pham.\\
\noindent Theorem \ref{thm:mainjezkoncentracja} is a consequence of the following result.
\begin{theo}\label{thm:rozdconentracja}
Fix the natural number $K \gs\lceil 12e/\delta\rceil$. Let  $X_1,\ldots,X_d$ be i.i.d. random variables with values in $[n]$ such that $\P(X_i=1) \gs 1-1/K$. Assume that for each $1\ls i \ls d$,  $f_i:[n]\to \R_0$ is a strictly increasing function and $f_i(1)=0$.
Let $L>0$ and define
\[\ccF:=\cbr{x\in [n]^d:   \sum^d_{i=1} t^x(i) f_i(x(i)) >L}.\]
Assume that $\ccF$ is not $\delta$-small. Let $(Y^l_i)_{i\ls d, l\ls  K}$ be independent copies of $X_1$.  Then
\[\E \sup_{x\in \ccF}\sum_{i\ls d} \sum_{l\ls K} t^x(i) f_i(Y^l_i)\gs L/4.  \]
\end{theo}

\begin{proof}[Proof of Theorem \ref{thm:mainjezkoncentracja}.]

Let $K'>0$ and define
\begin{align*}
  \ccF:=\cbr{x\in [n]^d:   \sum^d_{i=1} t^x(i) f_i(x(i)) >K' S(T)}.
\end{align*}
%where $t^x\in T$ are defined by the relation
%\[\sup_{t\in T} \sum_{i=1}^d t_i f_i(x(i))\1_{f_i(x(i))\gs \cons \E f_i(X_i)}=\sum^d_{i=1} t^x(i) f_i(x(i))\1_{f_i(x(i))\gs \cons \E f_i(X_i)}.  \]
By trivial argument
\begin{align}
  \E \sup_{x\in \ccF}\sum_{i\ls d} \sum_{l\ls K} t^x(i)f_i(Y^l_i)\ls \sum_{l\ls K} \E \sup_{t\in T} \sum_{i\ls d} t_i f_i(Y^l_i)=KS(T).\label{eq:loc91}
\end{align}
If $\ccF$ is not $\delta$-small then by Theorem \ref{thm:rozdconentracja}
\[  \E \sup_{x\in \ccF}\sum_{i\ls d} \sum_{l\ls K} t^x(i)f_i(Y^l_i) \geq K'/4 S(T).\]
So $\ccF$ is $\delta$ small for $K'>4K$.
\end{proof}

\noindent We deduce Theorem \ref{thm:wykl} from Theorem \ref{thm:mainjezkoncentracja}. But first we show a technical lemma which allows to construct better covers using our tail condition.
\begin{lema}\label{lem:poprodz}
Under the assumption of Theorem \ref{thm:wykl} the following holds. Let $\ccG \subset \{ (x_\ast,W):W\subset [d],x_\ast \in \R_0^d\}$ be such that 
\begin{align}
&\forall_{(x_\ast,W)\in \ccG} \forall_{i\in W} x_\ast(i) \geq K \E X_i, \nonumber \\
     &\left\{x\in \R^d_0:\sup_{t\in T} \sum_{i=1}^d t_i f_i(x(i)) \gs L \E \sup_{t\in T} \sum_{i=1}^d t_i f_i(X_i) \right\} \subset \bigcup_{(x_\ast,W)\in \ccG} \{x\in \R^d_0:\; \forall_{i\in W} f_i(x(i))\gs f_i(x_\ast(i))\} \label{eq:locaa1}, \\
     &\sum_{(x_\ast,W)\in \ccG} \P\big(\forall_{\in W}  f_i(X_i) \in [f_i(x_\ast(i),f_i(2x_\ast(i)))\big) \leq \delta. \nonumber
\end{align}
Then there exists $\beta>1$ depending only on $C,C'$ (constants appearing in conditions \eqref{eq:koncentracjaXniedyskretny},\eqref{eq:efrosnie}) such that
\begin{equation}\label{eq:locaa2}
 \left\{x\in \R^d_0:\sup_{t\in T} \sum_{i=1}^d t_i f_i(x(i)) \gs \beta L \E \sup_{t\in T} \sum_{i=1}^d t_i f_i(X_i) \right\} \subset \bigcup_{(x_\ast,W)\in \ccG} \{x\in \R^d_0:\; \forall_{i\in W} f_i(x(i))\gs  \beta f_i( x_\ast(i))\}    
\end{equation}
and
\begin{equation*}
 \sum_{(x_\ast,W)\in \ccG} \P\big(\forall_i   f_i(X_i) \geq \beta f_i(  x_\ast(i)\big) \leq \delta.   
\end{equation*}
\end{lema}
\begin{proof}
Formula \eqref{eq:locaa2} follows from \eqref{eq:locaa1} and homogeneity for any $\beta>1$. By \eqref{eq:koncentracjaXniedyskretny} we get that for any $(x_\ast,W)\in \ccG$ and for any $i \in W $ (since then $x_\ast(i)\geq K \E X_i$)
\[\P(X_i\in [x_\ast(i),2x_\ast(i)])\gs (1-1/C) \P(X_i\gs x_\ast(i)) \geq (1-1/C) C^\rho \P(X_i\gs 2^\rho x_\ast(i)) .\]
We pick the smallest $\rho \in \mathbb{N}$ satisfying $(1-1/C) C^\rho \geq 1$, so that the above and our assumptions give
\begin{align*}
 \delta &\geq   \sum_{(x_\ast,W)\in \ccG} \P\big(\forall_{i\in W} X_i \in [x_\ast(i),2x_\ast(i))\big) \geq \sum_{(x_\ast,W)\in \ccG} P(\forall_{i\in W} X_i\gs 2^\rho x_\ast(i))\\
 &=\sum_{(x_\ast,W)\in \ccG} P(\forall_{i\in W} f_i(X_i)\gs f_i(2^\rho x_\ast(i)))\gs \sum_{(x_\ast,W)\in \ccG} P(\forall_{i\in W} f_i(X_i)\gs (C')^\rho f_i(x_\ast(i))),
\end{align*}
where in the equality we used that each $f_i$ is increasing and in the last inequality we used  \eqref{eq:efrosnie} (applied $\rho$ times). We pick $\beta=(C')^\rho$.
\end{proof}

\begin{proof}[Proof of Theorem \ref{thm:wykl}]
W.l.o.g. we may assume that $\E X_i=1$ (otherwise we replace $X_i$ by $X_i/(\E X_i)$ and $f_i(t)$ by $\hat{f}_i(t)=f_i(t \E X_i)$). We pick the smallest $l\in \N$ for which
\begin{equation}\label{eq:wybl}
 2^l \gs K ,    
\end{equation}
and the smallest natural $n>l$ for which
\begin{align}\label{eq:wyben}
    \sum_{i=1}^d\P(f_i(X_i) \geq f_i(2^n)) \leq \delta/2.
\end{align}
By Jensen's inequality and \eqref{eq:oczekdol}
 \[\E \sup_{t\in T} \sum_{i=1}^d t_i f_i(X_i) \gs   \sup_{t\in T} \sum_{i=1}^d t_i \E f_i(X_i) \gs (C')^{-1} \sup_{t\in T} \sum_{i=1}^d t_i f_i(1).\]
Using that $f_1,\ldots,f_d$ are increasing,  \eqref{eq:efrosnie}, and the above
\[ \sup_{t\in T} \sum_{i=1}^d t_i f_i(x(i)) \1_{x(i) \ls 2^l } \leq  \sup_{t\in T} \sum_{i=1}^d t_i f_i(2^l ) \leq (C')^l  \sup_{t\in T} \sum_{i=1}^d t_i f_i(1) \leq (C')^{l+1} \E \sup_{t\in T} \sum_{i=1}^dt_i f_i(X_i) .   \]
This implies that for sufficiently large $L$ (we recall that $l$ depends only on $K$) and any $x\in \ccF$ (which is defined in \eqref{eq:defef})
 \begin{equation}\nonumber
 \sup_{t\in T} \sum_{i=1}^d t_i f_i(x(i)) \1_{x(i) \gs 2^l} \gs (L-(C')^{l+1}) \E \sup_{t\in T} \sum_{i=1}^dt_i f_i(X_i) \gs  \frac{L}{2} \E \sup_{t\in T} \sum_{i=1}^dt_i f_i(X_i) \1_{X_i \in [2^l, 2^n)}.  
 \end{equation}
So, clearly
\begin{align}\label{eq:loc981}
\ccF  \subset \left\{x\in \R^d_0:\ \sup_{t\in T} \sum_{i=1}^d t_i f_i(x(i))\1_{x(i) \in [2^l, 2^n)} \gs  \frac{L}{2} \E \sup_{t\in T} \sum_{i=1}^d t_i f_i(X_i)\1_{X_i \in [2^l, 2^n)} \right\} \cup \bigcup_{i=1}^d \{x\in \R^d_0: x(i) \gs 2^n \}. \nonumber
\end{align}
 Let  $g(t):=\sum_{k=l}^{n-1} 2^k \1_{2^k \ls t < 2^{k+1}}$. From \eqref{eq:efrosnie} we get that
\[\forall_{t\in [2^l,2^n)}\\ f_i(g(t)) \ls f_i(t)  \ls  C' f_i(g(t)).\] 
As a consequence (equality follows because $f_i$ are strictly increasing)
\begin{align}\label{eq:col1}
\ccF \subset \ccF_g \cup \bigcup_{i=1}^d \{x\in \R^d_0: x(i) \gs 2^n \}=\ccF_g \cup \bigcup_{i=1}^d \{x\in \R^d_0: f_i(x(i)) \gs f_i(2^n) \},    
\end{align}
where
\[\ccF_g=\left\{x\in [0,2^n)^d :\ \sup_{t\in T} \sum_{i=1}^d t_i f_i(g(x(i)))\1_{x(i) \in [2^l, 2^n)}  \gs \frac{L}{2C'} \E \sup_{t\in T} \sum_{i=1}^d t_i f_i(g(X_i)) \1_{x(i) \in [2^l, 2^n)}  \right\}.\]
\noindent Consider 
\[ X'_i=\begin{cases} 1 &X_i\in [0,2^l)\cup [2^n,\infty) \\ \log_2g(X_i)-l+2 &X_i\in [2^l,2^n)
\end{cases}, f'_i(k)=\begin{cases}
    0 & k=1 \\ f_i(2^{k+l-2}) &k=2,\ldots,n+1-l.
\end{cases} 
\]
It is easy to check that 
\begin{equation}\label{eq:rownosc}
 f_i(g(X_i))\1_{x(i) \in [2^l, 2^n)} =f'_i(X'_i) .  
\end{equation}
In particular
\begin{equation}\label{eq:exprowna}
    \E \sup_{t\in T} \sum_i t_i f'_i(X'_i)=\E \sup_{t\in T} \sum_i t_i f_i(g(X_i))\1_{X_i \in [2^l, 2^n)} .
\end{equation}
Define
\[\ccF'=\left\{x'\in [n+1-l]^d: \sup_{t\in T} \sum_{i=1}^d t_i f'_i(x'(i)) \gs \frac{L}{2C'} \E \sup_{t\in T} \sum_{i=1}^d t_i f'_i(X'_i) \right\}.\]
  By Markov's inequality 
\[\P(f'_i(X'_i)=f'_i(1))=\P(f'_i(X'_i)=0)\gs\P(X_i\in [0,2^l))\gs  1-2^{-l}\gs 1-1/K.\]
Thus, for  $L$ large enough, Theorem \ref{thm:mainjezkoncentracja}  asserts that there exists a cover 
\begin{equation}
    \ccG'\subset \{(x'_\ast,W'):x'_\ast\in \{2,\ldots,n+1-l\}^d, W\subset [d] \}\label{eq:rodzinazawarta},
\end{equation}
such that
\begin{align}
&\ccF' \subset \bigcup_{(x'_\ast,W')\in \ccG'} \{x'\in [n+1-l]^d:\; \forall_{i \in W'} f'_i(x'(i))\gs f'_i(x'_\ast(i)) >0\},\label{eq:zfartem} \\
& \delta/2\gs \sum_{(x'_\ast,W')\in \ccG' } \P (\forall_{i
    \in W'} f'_i(X'_i) =f'_i(x'_\ast(i))=\sum_{(x'_\ast,W')\in \ccG' } \P (\forall_{i
    \in W'} f'_i(X'_i) =f_i(2^{x'_\ast(i)+l-2})).\label{eq:marinesrodzina}
\end{align}
The inequality $f'_i(x'_\ast(i)) >0$ occurring in \eqref{eq:zfartem} holds, since $f'_i$ is strictly increasing (since $f_i$ is), $f'_i(1)=0$ and $x'_\ast(i)\gs 2$ (from \eqref{eq:rodzinazawarta}). The latter also implies formula for $f'_i(x'_\ast(i))$ so that equality in \eqref{eq:marinesrodzina} holds. 
Fix $y\in \ccF_g$ and define
\begin{equation}\label{eq:formulay'}
 y'(i):=\begin{cases} 1 &y(i)\in [0,2^l)\cup[2^n,\infty)\\ \log_2(g(y(i)))-l+2 &y(i)\in[2^{l},2^{n}).  \end{cases}\  
\end{equation}
%Then it is easy to check that 
%\begin{equation}\label{eq:formulay'}
% \forall_{i \in [d]} \left(y(i)\in [2^l, 2^n)\right) \Rightarrow \left( g(y(i)) =2^{y'(i)+l-2} \right) 
%\end{equation}
In particular
\begin{equation}\label{eq:formulay'1}
 \forall_{i \in [d]} f_i(g(y(i)))\1_{y_i \in [2^l, 2^n)} =f'_i(y'(i)),      
\end{equation}
so that
\begin{align*}
\sup_{t\in T} \sum_i t_i f_i(g(y(i)))\1_{y_i \in [2^l, 2^n)} =\sup_{t\in T} \sum_i t_i f'_i(y'(i)).
\end{align*}
The latter equation and \eqref{eq:exprowna} imply that $y'\in \ccF'$ (since $y\in \ccF_g$). From \eqref{eq:zfartem} there exists $(y'_\ast,W')\in \ccG'$ such that (in the equality we use \eqref{eq:formulay'1})
\[\forall_{i\in W'} f_i(g(y(i)))\1_{y_i \in [2^l, 2^n)} =  f'_i(y'(i))\gs f'_i(y'_\ast(i)) >0. \]
Since $y$ was chosen arbitrarily we obtain
\[ \ccF_g \subset \bigcup_{(x'_\ast,W')\in \ccG'} \left\{x\in \R^d_0:\; \forall_{i\in W'} f_i(x(i))\gs f_i(2^{x'_\ast(i)+l-2})  \right\}.  \]
By the above and \eqref{eq:col1}
\begin{align*}
\ccF\subset \bigcup_{(x'_\ast,W')\in \ccG'} \left\{x\in \R^d_0:\; \forall_{i\in W'} f_i(x(i))\gs f_i(2^{x'_\ast(i)+l-2})  \right\} \cup \bigcup_{i=1}^d \{x\in \R^d_0: f_i(x(i)) \gs f_i(2^n) \}.
\end{align*}
Substituting formula \eqref{eq:rownosc} into \eqref{eq:marinesrodzina} and using \eqref{eq:wyben}   yields
\begin{align}
 \delta&\gs \sum_{(x'_\ast,W')\in \ccG' } \P \left(\forall_{i
    \in W'}f_i(g(X_i))\1_{X_i \in [2^l, 2^n)} =f_i(2^{x'_\ast(i)+l-2})\right)+\sum_i \P \left(f_i(X_i) \geq f_i(2^n) \right)\nonumber \\
    &=\sum_{(x'_\ast,W')\in \ccG' } \P \left(\forall_{i
    \in W'} X_i \in \left[ 2^{x'_\ast(i)+l-2},2^{x'_\ast(i)+l-1}\right)\right)+\sum_i \P \left(f_i(X_i) \geq f_i(2^n) \right)\nonumber\\
    &=\sum_{(x'_\ast,W')\in \ccG' } \P \left(\forall_{i
    \in W'} f_i(X_i) \in \left[ f_i(2^{x'_\ast(i)+l-2}),f_i(2^{x'_\ast(i)+l-1})\right)\right)+\sum_i \P \left(f_i(X_i) \geq f_i(2^n) \right), \label{eq:locpryncypia}
\end{align}
where in the last line we used that $f_i$ are strictly increasing. By construction, \eqref{eq:wybl} and our normalization
\[\forall_{(x'_\ast,W')\in \ccG'} \forall_{i\in W'} x'_\ast(i) \geq 2 \Rightarrow 2^{x'_\ast(i)+l-2} \geq  2^l \geq K \E X_i.\]
Thus we conclude the statement by invoking Lemma \ref{lem:poprodz}.
\end{proof}
\begin{rema}\label{rem:zmiendow}
We could assume 
\begin{equation}\label{eq:locogon}
  \forall_{t\gs K\E X_i }\ \P(X_i\gs t )\gs C \P(X_i\gs M t),  
\end{equation}
instead of condition \eqref{eq:koncentracjaXniedyskretny} (in which $M=2$). It is enough to consider $g(t):=\sum_{k} M^k \1_{M^k \leq t < M^{k+1}}$ in the proof above. The only difference (apart from some editorial changes, e.g. to consider $\log_M$ instead of $\log_2$) is that instead of $\eqref{eq:locpryncypia}$ we get
\[\delta \geq \sum_{(x'_\ast,W')\in \ccG' } \P \left(\forall_{i
    \in W'} f_i(X_i) \in \left[ f_i(M^{x'_\ast(i)+l-2}),f_i(M^{x'_\ast(i)+l-1})\right)\right)+\sum_i \P \left(f_i(X_i) \geq f_i(M^n) \right).\]
Then it is enough to prove a version of Lemma \ref{lem:poprodz} which is adapted to the tail condition \eqref{eq:locogon}. We leave the details to the reader.
\end{rema}
\section{Proof of Theorem \ref{thm:rozdconentracja}}\label{sec:pdisc}
In this section, we prove Theorem \ref{thm:rozdconentracja}. For the rest of this work we fix $L>0$ and a functions
\begin{equation}\label{eq:wlasnoscif}
 f_i:[n]\to \R_+\cup\{0\} \textrm{ is strictly increasing, } f_i(1)=0,\ i=1,\ldots,d.
\end{equation}
To shorten the notation, denote
\[\ccF:=\cbr{x\in [n]^d:   \sum^d_{i=1} t^x(i) f_i(x(i)) >L}.\]
We begin with a definition.
\begin{defi}\label{def:bady}
We say that $y=(y^l(i))_{l\ls K,i\ls d}\in [n]^{Kd}$ is bad if  
\[
\sup_{x\in \ccF}\sum^K_{l=1} \sum^d_{i=1}t^x(i) f_i(y^l(i)) \ls L/2.
\]	    
\end{defi}
\noindent Theorem \ref{thm:rozdconentracja} is a rather simple consequence of the following lemma.
\begin{lema}\label{lem:prawdzly}
Let $K \gs \left\lceil 12e/\delta\right\rceil$ be a natural number and let for each $i\leq d$ $f_i:[n]\to \R_0$ be a strictly increasing function, such that $f_i(1)=0$. Let $X_1,\ldots,X_d$ be i.i.d. non-negative random variables for which  $\P(X_1=1)=\P(f_i(X_i)=0)\gs 1-1/K$. Suppose $\ccF$ is not $\delta$-small. Let $Y=(Y^l_i)_{l\ls K,i\ls d}\in [n]^{Kd}$ where $(Y^l_i)_{l\ls K,i\ls d}$ are independent copies of $X_1$. Then
\[\P(Y \textrm{ is bad})\ls \frac{1}{2}.\]
\end{lema}
\begin{proof}[Proof of Theorem \ref{thm:rozdconentracja}]
 Lemma \ref{lem:prawdzly} states that 
 \[\mathrm{Med}\left(\sup_{x\in \ccF}\sum^K_{l=1} \sum^d_{i=1}t^x(i) f_i(Y^l(i))\right)\gs L/2,\]
 so by a trivial argument
 \[\E \sup_{x\in \ccF}\sum^K_{l=1} \sum^d_{i=1}t^x(i) f_i(Y^l(i)) \gs L/4.\]
\end{proof}

\noindent The rest of the section is devoted to the proof of Lemma \ref{lem:prawdzly}. We define
\begin{align*}
  S_y(j)&:=\{i\in [d]:\;   \exists_{l\in [K]} \; y^l(i)\gs j \}=\{i\in [d]:\;   \exists_{l\in [K]} \; f_i(y^l(i))\gs f_i(j) \}\\
 S^{=}_y(j)&:=S_{y}(j)\backslash S_y(j+1)=\{ i\in [d]:\; \forall_{l\in [K]}y^l(i)\ls j,\    \exists_{l\in [K]} \; y^l(i)=j \} \\
 &= \{ i\in [d]:\; \forall_{l\in [K]}f_i(y^l(i))\ls f_i(j),\    \exists_{l\in [K]} \; f_i(y^l(i))=f_i(j) \}, 
\end{align*}
where the equalities in the above formulas holds, since $f_1,\ldots,f_d$ are strictly increasing.
\begin{lema}\label{lem:pier}
Fix $x\in \ccF$ and $y\in [n]^{Kd}$ which is bad. Then there exists a  number $\va(x,y)\gs 0$ with the following property. Define sets
\begin{align*}
J_{x,y}&:=\{i\ls d: t^{x}_if_i(x(i))>\va(x,y)\},\\
S_{x,y}&:=\cbr{i\in [d]:\; \exists_{l\ls K} \;\; y^l(i)\gs x(i)}=\cbr{i\in [d]:\; \exists_{l\ls K} \;\; f_i(y^l(i))\gs f_i(x(i))} =\cbr{i\in [d]:\; i\in S_{y}(x(i))}.
\end{align*}
Then $|S_{x,y}\cap J_{x,y}|\ls 1/2 |J_{x,y}|$.
\end{lema}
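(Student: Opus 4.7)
The plan is to prove the lemma by a layer-cake / averaging argument exploiting the bad condition on $y$. The key observation is that the bad condition immediately controls the contribution of $S_{x,y}$ to the "true" process at $x$.

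First I would record the pointwise inequality: for each $i\in S_{x,y}$, by definition there exists $l\le K$ with $f(y^l(i))\gs f(x(i))$; since all $t_i^x$ and all $f$-values are non-negative,
\[
\sum_{l=1}^K t_i^x f(y^l(i)) \;\gs\; t_i^x f(x(i)).
\]
Summing this over $i\in S_{x,y}$ and then bounding the whole double sum from above by the bad condition (Definition \ref{def:bady}) gives
\[
\sum_{i\in S_{x,y}} t_i^x f(x(i)) \;\ls\; \sum_{l=1}^K\sum_{i=1}^d t_i^x f(y^l(i)) \;\ls\; \tfrac{1}{2}L.
\]
Combined with $x\in\ccF$, which says $\sum_{i=1}^d t_i^x f(x(i))>L$, we get the strict inequality
\[
\sum_{i\in S_{x,y}} t_i^x f(x(i)) \;<\; \tfrac12 \sum_{i=1}^d t_i^x f(x(i)).
\]

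Next I would pick $\va(x,y)>0$ via a layer-cake argument. For $\va>0$ write $J_\va:=\{i:\,t_i^x f(x(i))>\va\}$. The identity $\sum_i t_i^x f(x(i))=\int_0^\infty |J_\va|\,d\va$ (and similarly for the restriction to $S_{x,y}$) lets us rewrite the previous display as
\[
\int_0^\infty |S_{x,y}\cap J_\va|\,d\va \;<\; \tfrac12\int_0^\infty |J_\va|\,d\va.
\]
Therefore the set of $\va>0$ on which $2|S_{x,y}\cap J_\va|\ls |J_\va|$ has positive measure, and in particular is non-empty. Choosing any such $\va$ and setting $\va(x,y):=\va$, $J_{x,y}:=J_\va$ proves $|S_{x,y}\cap J_{x,y}|\ls |J_{x,y}|/2$, as required.

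The only point requiring any care is the contradiction step: one must be sure the strict inequality on integrals really forces $|S_{x,y}\cap J_\va|\ls |J_\va|/2$ for some $\va>0$, rather than just $<$ holding generically; but this is immediate, since if $2|S_{x,y}\cap J_\va|>|J_\va|$ held for almost every $\va>0$, integrating would yield the opposite of the displayed strict inequality. No structural hypothesis on $f$ beyond \eqref{eq:wlasnoscif}, and no use of the concentration condition \eqref{eq:koncentracjazalozenia}, is needed for this lemma — it is purely a deterministic consequence of $x\in\ccF$ and the badness of $y$.
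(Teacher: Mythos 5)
Your proof of the inequality $\sum_{i\in S_{x,y}} t_i^x f(x(i)) < \tfrac12\sum_{i=1}^d t_i^x f(x(i))$ is exactly the paper's inequality \eqref{eq:Witold}, and the layer-cake reformulation $\int_0^\infty|S_{x,y}\cap J_\va|\,d\va < \tfrac12\int_0^\infty |J_\va|\,d\va$ is a clean way to see it. The route from there, however, diverges from the paper's in a way that matters. The paper does not merely assert existence of a good $\va$: it defines $F(\va)=\sum_i\1_{i\in S_{x,y}}\min(t_i^x f(x(i)),\va)-\tfrac12\sum_i\min(t_i^x f(x(i)),\va)$ (which is precisely $\int_0^\va(|S_{x,y}\cap J_u|-\tfrac12|J_u|)\,du$ in your language) and sets $\va(x,y):=\sup\{\va:F(\va)\gs 0\}$, then reads the conclusion off the right-derivative of $F$ at that extremal point. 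This \emph{extremal characterization} of $\va(x,y)$ is what is used later: Lemma \ref{lem:dlugirach} explicitly invokes ``the definition of $\va(\cdot,\cdot)$ (cf.\ \eqref{eq:defeps})'' and its whole proof consists in verifying $F^{(y)}(\va(x'_\ast,y'))\gs 0$ in order to conclude $\va(x'_\ast,y')\ls\va(x'_\ast,y)$. An arbitrary $\va$ from a positive-measure set, as you pick it, carries no such sup-characterization, so your proof, while valid for the isolated statement, cannot be substituted into the paper without also reworking Lemma \ref{lem:dlugirach}.

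There is also a smaller logical point worth noting. As written, your argument establishes the existence of a $\va>0$ with $2|S_{x,y}\cap J_\va|\ls|J_\va|$, but this is vacuously true whenever $\va\gs M:=\max_i t_i^x f(x(i))$, since then $J_\va=\emptyset$; your contrapositive (``if $>$ held for a.e.\ $\va>0$'') never actually triggers, because it already fails on $[M,\infty)$. To get the non-vacuous content that the paper implicitly needs (a nonempty $J_{x,y}$, hence a nonempty witness $W_{x,y}$ in Remark \ref{rem:rodz}), observe instead that the integrand $|S_{x,y}\cap J_\va|-\tfrac12|J_\va|$ is identically zero on $[M,\infty)$, so the strictly negative total integral forces the integrand to be strictly negative on a positive-measure subset of $(0,M)$; any $\va$ there gives $|S_{x,y}\cap J_\va|<\tfrac12|J_\va|$ with $J_\va\neq\emptyset$. (The paper's construction yields this automatically, since $F\equiv F(M)<0$ on $[M,\infty)$ implies $\va(x,y)<M$.)
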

\begin{proof}
Consider a function $F:[0,\infty)\ra \R$,  given by
\[
F(\va)= \sum^d_{i=1}  \1_{i\in S_{x,y}} \min\rbr{t^x(i) f_i(x(i)),\va} - 1/2  \sum^d_{i=1} \min\rbr{t^x(i)  f_i(x(i)), \va}.
\]
By the definition of the set $S_{x,y}$, the family $\ccF$, and since $y\in [n]^{Kd}$ is bad, we have
\begin{equation}\label{eq:Witold}
  \sum^d_{i=1}  \1_{i\in S_{x,y}} t^x(i) f_i(x(i))\ls   \sum^d_{i=1}\sum^K_{l=1} t^{x}_i f_i(y^l(i)) < 1/2  \sum^d_{i=1} t^{x}_i  f_i(x(i)), 
\end{equation}
so we may define
\begin{equation}\label{eq:defeps}
\va(x,y)=\sup\{\va: F(\va)\gs 0 \}\in [0,\infty).    
\end{equation}
Now observe that for sufficiently small $\eta>0$ 
\begin{align*}
\sum^d_{i=1}  \1_{i\in S_{x,y}} \min\rbr{t^{x}_i f_i(x(i)),\va(x,y)+\eta}&= \sum^d_{i=1}  \1_{i\in S_{x,y}} \min\rbr{t^x(i) f_i(x(i)),\va(x,y)}+\eta \sum^d_{i=1}  \1_{i\in S_{x,y}\cap J_{x,y}}\\
\sum^d_{i=1} \min\rbr{t^x(i)  f_i(x(i)), \va(x,y)+\eta}&=\sum^d_{i=1} \min\rbr{t^x(i)  f_i(x(i)), \va(x,y)}+\eta \sum^d_{i=1} \1_{i \in J_{x,y}}.
\end{align*}
Since $F(\va(x,y)+\eta)<0$, the two identities above imply that
\[\sum^d_{i=1}  \1_{i\in S_{x,y}} \min\rbr{t^x(i) f_i(x(i)),\va(x,y)}+\eta \sum^d_{i=1}  \1_{i\in S_{x,y}\cap J_{x,y}} < 1/2  \sum^d_{i=1} \min\rbr{t^x(i)  f_i(x(i)), \va(x,y)}+1/2 \eta \sum^d_{i=1} \1_{i\in J_{x,y}}. \]
But $F(\va(x,y))\gs 0$ so the above inequality gives
\[ |S_{x,y}\cap J_{x,y}|=\sum^d_{i=1}  \1_{i\in S_{x,y}\cap J_{x,y}} < 1/2   \sum^d_{i=1} \1_{i\in J_{x,y}}=1/2|J_{x,y}|. \]
\end{proof}
 %Define the set
%Then Lemma \ref{lem:pier} concludes that
%\begin{align}\label{eq:1}
%|W_{x,y}|=|J_{x,y}|-|J_{x,y}\cap S_{x,y}| \gs(1-c)|J_{x,y}|.
%\end{align}
\noindent For abbreviation, we define
\begin{align*}
  J_{x}(j)&:=\{i\in [d]:\; x(i)\gs j\}\\
  J_{x,y}(j)&:=J_{x,y}\cap J_{x}(j)=\{i\in [d]: t^x(i)f_i(x(i))>\va(x,y),\ x(i)\gs j\}\\
  J^=_{x,y}(j)&:=J_{x,y}(j)\setminus J_{x,y}(j+1)=  \{i\in [d]: t^x(i)f_i(x(i))>\va(x,y),\ x(i)=j\} \\
  S_{x,y}(j)&:=S_{x,y}\cap J_{x}(j)=\{i\in [d]: \exists_{l\ls K} y^l(i)\gs x(i),\ x(i)\gs j \}.
\end{align*}
Since for every $i\leq d$, the function $f_i$ is strictly increasing, we have
\begin{align*}
  J_{x}(j)&=\{i\in [d]:\; f_i(x(i))\gs f_i(j)\}\\
  J_{x,y}(j)&=\{i\in [d]: t^x(i)f_i(x(i))>\va(x,y),\ f_i(x(i))\gs f_i(j)\} \\
  J^=_{x,y}(j)&=J_{x,y}(j)\setminus J_{x,y}(j+1)=  \{i\in [d]: t^x(i)f_i(x(i))>\va(x,y),\ f_i(x(i))=f_i(j)\} \\
  S_{x,y}(j)&=S_{x,y}\cap J_{x}(j)=\{i\in [d]: \exists_{l\ls K} f_i(y^l(i))\gs f_i(x(i)),\ f_i(x(i))\gs f_i(j) \}.
\end{align*}

\noindent  Define also
\begin{align*}
  W_{x,y}:=J_{x,y}\cap S^c_{x,y}&=\{i\in [d]:t^{x}_if_i(x(i))>\va(x,y),\ \forall_{l\ls K} y^l(i)<x(i) \}\\
  &=\{i\in [d]:t^{x}_if_i(x(i))>\va(x,y),\ \forall_{l\ls K} f_i(y^l(i))<f_i(x(i)) \},\\
 W_{x,y}(j):=J_{x,y}(j) \cap  S^c_{x,y}&=\{i\in [d]:t^{x}_if_i(x(i))>\va(x,y),\ x(i)\gs j ,\ \forall_{l\ls K} y^l(i)<x(i)\}\\
 &=\{i\in [d]:t^{x}_if_i(x(i))>\va(x,y),\ f_i(x(i))\gs f_i(j) ,\ \forall_{l\ls K} f_i(y^l(i))<f_i(x(i))\}.
 \end{align*}
For any $x\in \ccF$,  $t^x\in T\subset \R^d_0$, so by the definition of $W_{x,y}$ we have
\begin{equation}\label{eq:naWsieniezeruje}
    \forall_{i\in W_{x,y}} f_i(x(i))>0 \ ( \textrm{equivalently }  \forall_{i\in W_{x,y}}  x(i)>1  ) . 
\end{equation}
The above equivalence holds since for each $i\leq d$ the function $f_i$ is strictly increasing and $f_i(1)=0$.

\noindent We leave it to the reader to verify that
 \begin{equation}\label{eq:gradacjaW}
   W_{x,y}(j)   = \bigcup^n_{l=j} J^{=}_{x,y}(l)\cap S^c_y(l).
 \end{equation}
Clearly $J_{x,y}(j)\supset J_{x,y}(j+1)$ and  $W_{x,y}(j)\supset W_{x,y}(j+1)$.
 Finally, we define
 \begin{align*}
 W^=_{x,y}(j)&:=W_{x,y}(j)\backslash W_{x,y}(j+1)=J^{=}_x(j) \cap S^c_y(j)=W_{x,y}(j) \cap J^{=}_{x,y}(j)\\
    &= \{i\in [d]:t^{x}_if_i(x(i))>\va(x,y),\ x(i)= j ,\ \forall_{l\ls K} \ y^l(i)<x(i)\}\\
    &=\{i\in [d]:t^{x}_if_i(x(i))>\va(x,y),\ f_i(x(i))= f_i(j) ,\ \forall_{l\ls K} \ f_i(y^l(i))<f_i(x(i))\}.
 \end{align*}
Obviously $W_{x,y}=\bigcup_{k=1}^n W^=_{x,y}(k)$ and the sets $(W^=_{x,y}(k))_{k=1,\ldots,n}$ are disjoint so
\begin{equation}\label{eq:locpikachu}
   \P(\forall_{i\in W_{x,y}} X_i=x(i))= \prod_{k=1}^n p_k^{|W^=_{x,y}(k)|}=\P(\forall_{i\in W_{x,y}} f_i(X_i)=f_i(x(i))), 
\end{equation}
where
\[p_k:=\P(X_i=k)=\P(f_i(X_i)=f_i(k)).\]
Also by the assumptions of the Lemma \ref{lem:prawdzly} $f_i(1)=0$ and  $\P(X_1=1)\gs 1-1/K$, so
\begin{equation}\label{eq:uciecie}
 p_1=\P(f_i(X_i)=0)=\P(f_i(Y^l(i))=0)\gs 1-1/K.
\end{equation}
\noindent
Since $f_i(1)=0$, we have that  
\begin{equation}\label{eq:w1zeruje}
    W^=_{x,y}(1)=\emptyset,
\end{equation}
and as a result $  \P(\forall_{i\in W_{x,y}} X_i=x(i))= \prod_{k=2}^n p_k^{|W^=_{x,y}(k)|}$ but we deliberately state \eqref{eq:locpikachu} as it is.\\

\begin{lema}\label{eq:wyliczonyZ}
Using the above notation
\[\rbr{W_{x,y}(j)\cup S_y(j) }\backslash \rbr{W_{x,y}(j+1)\cup S_y(j+1) }=W^=_{x,y}(j)\cup \rbr{S^{=}_y(j)\backslash W_{x,y}}.\]
\end{lema}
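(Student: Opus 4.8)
The plan is to prove the set identity by a direct membership computation: I would fix $i\in[d]$ and determine precisely when $i$ lies in the left-hand side, splitting on whether $i\in W_{x,y}(j)$. All the needed facts are already available in the excerpt: the nestings $W_{x,y}(j)\supseteq W_{x,y}(j+1)$ and $S_y(j)\supseteq S_y(j+1)$; the defining relations $W_{x,y}(j)\backslash W_{x,y}(j+1)=W^=_{x,y}(j)$ and $S_y(j)\backslash S_y(j+1)=S^=_y(j)$; the identity $W_{x,y}=W_{x,y}(1)\supseteq W_{x,y}(j)$ for every $j$ (valid because $f(x(i))\gs 0=f(1)$); and the strict monotonicity of $f$ on the finite set $[n]$, which makes $f(x(i))>f(j)$ equivalent to $f(x(i))\gs f(j+1)$.

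By definition, $i$ belongs to the left-hand side iff $i\in W_{x,y}(j)\cup S_y(j)$ while $i\notin W_{x,y}(j+1)$ and $i\notin S_y(j+1)$. In the case $i\in W_{x,y}(j)$, the condition $i\notin W_{x,y}(j+1)$ already forces $i\in W_{x,y}(j)\backslash W_{x,y}(j+1)=W^=_{x,y}(j)$; but for such $i$ one has $f(y^l(i))<f(x(i))=f(j)$ for all $l\ls K$, so $i\notin S_y(j)$ and hence $i\notin S_y(j+1)$ automatically. Thus this case contributes exactly $W^=_{x,y}(j)$. In the complementary case $i\notin W_{x,y}(j)$, the nesting gives $i\notin W_{x,y}(j+1)$ for free, so $i$ lies in the left-hand side iff $i\in S_y(j)\backslash S_y(j+1)=S^=_y(j)$ and $i\notin W_{x,y}(j)$.

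The only remaining point is to replace $W_{x,y}(j)$ by $W_{x,y}$ in this last expression, i.e.\ to show $S^=_y(j)\backslash W_{x,y}(j)=S^=_y(j)\backslash W_{x,y}$. The inclusion $\supseteq$ is immediate from $W_{x,y}(j)\subseteq W_{x,y}$; for the converse I would argue that $S^=_y(j)\cap W_{x,y}\subseteq W_{x,y}(j)$: if $i\in S^=_y(j)\cap W_{x,y}$, pick $l$ with $f(y^l(i))=f(j)$ (such $l$ exists by the description of $S^=_y(j)$), so that $f(j)=f(y^l(i))<f(x(i))$ because $i\in W_{x,y}$; hence $f(x(i))>f(j)$, hence $f(x(i))\gs f(j+1)$ by strict monotonicity, hence $i\in W_{x,y}\cap J_x(j+1)=W_{x,y}(j+1)\subseteq W_{x,y}(j)$. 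Adding the two disjoint cases $W^=_{x,y}(j)$ and $S^=_y(j)\backslash W_{x,y}$ then yields the claimed identity. The argument is essentially bookkeeping; the only delicate step — the one I would flag as the main obstacle — is precisely this replacement of $W_{x,y}$ by $W_{x,y}(j)$, which is exactly where the discreteness of the range of $f$ enters.
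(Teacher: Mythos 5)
Your proof is correct and follows essentially the same route as the paper's: both reduce the identity to elementary set bookkeeping plus the single non-trivial observation that an index of $S^{=}_y(j)$ lying in $W_{x,y}$ must satisfy $f(x(i))\gs f(j+1)$ (the paper states this contrapositively as $S^{=}_y(j)\cap J^c_x(j+1)\subset S_{x,y}$), which is exactly where the strict monotonicity of $f$ on $[n]$ enters. The only difference is the intermediate set ($S^{=}_y(j)\setminus W_{x,y}(j)$ in your argument versus $S^{=}_y(j)\setminus W_{x,y}(j+1)$ in the paper's), which is immaterial.
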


\begin{proof}
Clearly $S_y(j+1)\subset S_y(j)  $ and $W^=_{x,y}(j)\subset S^c_y(j)$ so $W^=_{x,y}(j) \cap S_y(j+1)=\emptyset$. This implies
\begin{equation}\label{eq:locc2}
 \rbr{W_{x,y}(j)\cup S_y(j) }\backslash \rbr{W_{x,y}(j+1)\cup S_y(j+1) }=W^=_{x,y}(j)\cup \rbr{ S^{=}_y(j)\backslash W_{x,y}(j+1)}.   
\end{equation}
By standard set-theoretic operations
\begin{align}
 S^{=}_y(j)\backslash W_{x,y}(j+1)&=S^{=}_y(j)\backslash \rbr{ J_{x,y}(j+1) \cap  S^c_{x,y}}\nonumber\\
  =S^{=}_y(j)\backslash \rbr{ J_{x,y} \cap J_x(j+1)  \cap S^c_{x,y} }&=\rbr{S^{=}_{y}(j)\cap J^c_{x,y}}\cup \rbr{S^{=}_y(j)\cap J^c_x(j+1)}
\cup \rbr{S^{=}_y(j)\cap S_{x,y}} \nonumber\\
=\rbr{S^{=}_{y}(j)\cap J^c_{x,y}}\cup \rbr{S^{=}_y(j)\cap S_{x,y}}&=\rbr{S^{=}_{y}(j)\setminus ( J_{x,y}\cap S^c_{x,y} )}=S^{=}_y(j)\backslash W_{x,y},\label{eq:loc3}
\end{align}
where  in the last line we have used that
\begin{align*}
S^{=}_y(j)\cap J^c_x(j+1)&=\left\{i\in [d]: \forall_{l\ls K} y^l(i)\ls j,\ \exists_{l\ls K}  y^l(i)=j,\ x(i)<j+1 \right\}\\
&=\left\{i\in [d]: \forall_{l\ls K} f_i(y^l(i))\ls f_i(j),\ \exists_{l\ls K}  f_i(y^l(i))=f_i(j),\ f_i(x(i))<f_i(j+1) \right\}\subset S_{x,y}.    
\end{align*}

The assertion follows by \eqref{eq:locc2} and \eqref{eq:loc3}.
\end{proof}

\noindent We introduce the pivotal definition for this paper.

\begin{defi}[Witness]\label{def:swiad}
 We say that $x'\in \ccF$ is admissible for $(x,y)$,  if 
\begin{equation}\label{eq:loccc1}
 \forall_{j\in [n]}  \forall_{i\in  W_{x',y}(j)} \ j\ls  x(i) \textrm{ (equivalently } f_i(j)\ls  f_i(x(i)))  .   
\end{equation}
Among all $x'\in \ccF$ that are $(x,y)$ admissible we choose the one for which the cardinality of $J_{x',y}$ is the smallest. Among the latter, we choose any $x'$ such that $W_{x',y}$ has a minimum number of elements. We denote the chosen element (with a little abuse of notation) by $x_\ast$. We refer to $x_\ast$ as a witness.
\end{defi}
\begin{rema}\label{rem:warrow}
In fact, \eqref{eq:loccc1} is equivalent to 
\[\forall_{i\in W_{x',y}} \ x'(i)\ls x(i)  \textrm{ (equivalently } f_i(x'(i))\ls f_i(x(i))) . \]
To see this, it is sufficient to observe that $W_{x',y}=W_{x',y}(1)$ (since $f_i(j)\gs f_i(1)$). However, we deliberately state \eqref{eq:loccc1} as it is, because it is more convenient. 
\end{rema}

\begin{fak}\label{fak:rodz}
Fix the bad $y\in [n]^{dK}$ and consider $\ccG(y):=\{(x_\ast,W_{x_\ast,y}): x\in \ccF\}$, where $x_\ast$ is the $(x,y)$ witness from Definition \ref{def:swiad}. Then $\ccG(y)$ is a cover of $\ccF$. In other words, the set of all witnesses defines a cover of $\ccF$. 
\end{fak}
\begin{proof}
Take any $x\in \ccF$. Witness $x_\ast$ is $(x,y)$ admissible (formula \eqref{eq:loccc1}) so that by \eqref{eq:naWsieniezeruje}  
\begin{equation}\label{eq:pokrycienieujemne}
 \forall_{i\in W_{x_\ast,y}}  0< f_i(x_\ast(i))\ls f_i(x(i)).
\end{equation}
\end{proof}

\noindent
We now introduce a technical lemma which is used in the proof of Lemma \ref{lem:dlugirach}.
\begin{lema}\label{lem:tech}
   Fix $x,x'\in \ccF$ and bad $y,y'\in [n]^{Kd}$. Let $x_\ast$ be a $(x,y)$ witness and $x'_\ast$ be a $(x',y')$ witness.  Assume that $|W_{x_\ast,y}|=|W_{x'_\ast,y'}|$. Then
    \[\sum^n_{j=1} \sum_{i\in J^{=}_{x_{\ast}',y'}(j)} \1_{i\in W_{x_{\ast}',y'}(j)\backslash S_{y'}}\gs\sum^n_{j=1} \sum_{i\in J^{=}_{x'_{\ast}}(j)} \1_{i\in W_{x_{\ast},y}(j)\backslash S_y(j)} \]
\end{lema}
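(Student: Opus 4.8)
\medskip

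The plan is to compare the two quantities term by term after reindexing, using the minimality property that defines a witness. The left-hand side, by \eqref{eq:gradacjaW} and the decomposition $W_{x'_\ast,y'}=\bigcup_{k} W^=_{x'_\ast,y'}(k)$, counts exactly $|W_{x'_\ast,y'}|$; indeed $\sum_j \sum_{i\in J^=_{x'_\ast,y'}(j)} \1_{i\in W_{x'_\ast,y'}(j)\setminus S_{y'}}=\sum_j |W^=_{x'_\ast,y'}(j)|=|W_{x'_\ast,y'}|$, because $i\in J^=_{x'_\ast,y'}(j)$ together with $i\in W_{x'_\ast,y'}(j)$ forces $i\in W^=_{x'_\ast,y'}(j)$ (the index $j$ on which $f(x'_\ast(i))=f(j)$ is unique since $f$ is strictly increasing), and on $W^=_{x'_\ast,y'}(j)$ we automatically have $i\notin S_{y'}(j)\supset S_{y'}(j+1)$, hence $i\notin S_{y'}$ is not what we need — rather $i\in S^c_{x'_\ast,y'}$, which is built into the definition of $W$. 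So the left side equals $|W_{x'_\ast,y'}|$, and by hypothesis this equals $|W_{x_\ast,y}|$.

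\medskip

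It therefore suffices to show that the right-hand side is at most $|W_{x_\ast,y}|$. First I would rewrite the right side: $\sum_j\sum_{i\in J^=_{x'_\ast}(j)}\1_{i\in W_{x_\ast,y}(j)\setminus S_y(j)}$. Here $J^=_{x'_\ast}(j)=\{i: f(x'_\ast(i))=f(j)\}$ partitions $[d]$ as $j$ ranges over $[n]$ (again using strict monotonicity of $f$ and $f(1)=0$), so the double sum is just $\sum_{i\in[d]} \1_{i\in W_{x_\ast,y}(j_i)\setminus S_y(j_i)}$ where $j_i$ is the unique index with $f(x'_\ast(i))=f(j_i)$. The key point is that this counts a subset of $W_{x_\ast,y}$: if $i$ contributes, then $i\in W_{x_\ast,y}(j_i)\subset W_{x_\ast,y}$, so trivially the right side is at most $|W_{x_\ast,y}|$ — but this only gives the inequality with the roles as stated, i.e. RHS $\le |W_{x_\ast,y}| = |W_{x'_\ast,y'}|=$ LHS, which is exactly what is claimed. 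I expect the genuine content to be extracting $|W_{x'_\ast,y'}|=$ LHS cleanly and recognizing the trivial bound RHS $\le |W_{x_\ast,y}|$; the equal-cardinality hypothesis then closes the loop.

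\medskip

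The step I expect to be the main obstacle is the bookkeeping that shows the left-hand sum really does recover the \emph{full} cardinality $|W_{x'_\ast,y'}|$ and not something smaller: one must check that for each $j$, every element of $W^=_{x'_\ast,y'}(j)$ is counted once and only once, i.e. that the indicator $\1_{i\in W_{x'_\ast,y'}(j)\setminus S_{y'}}$ restricted to $i\in J^=_{x'_\ast,y'}(j)$ is precisely $\1_{i\in W^=_{x'_\ast,y'}(j)}$. This uses three facts in combination: (i) $i\in J^=_{x'_\ast,y'}(j)$ and $i\in W_{x'_\ast,y'}(j)=J_{x'_\ast,y'}(j)\cap S^c_{x'_\ast,y'}$ give $i\in J^=_{x'_\ast,y'}(j)\cap S^c_{x'_\ast,y'}=W^=_{x'_\ast,y'}(j)$ by the displayed identity in the definition of $W^=$; (ii) conversely $W^=_{x'_\ast,y'}(j)\subset S^c_y(j)$ as noted in the proof of Lemma \ref{eq:wyliczonyZ}, so the ``$\setminus S_{y'}$'' (understood as $\setminus S_{y'}(j)$ in the relevant range, or absorbed into $S^c_{x'_\ast,y'}$) removes nothing; (iii) the sets $W^=_{x'_\ast,y'}(j)$, $j\in[n]$, are pairwise disjoint with union $W_{x'_\ast,y'}$. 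Granting this, the lemma is immediate. I would write it out by first stating the identity LHS $=|W_{x'_\ast,y'}|$, then RHS $\le|W_{x_\ast,y}|$ with the one-line partition argument, then invoking $|W_{x_\ast,y}|=|W_{x'_\ast,y'}|$.
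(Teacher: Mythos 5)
Your argument is correct and essentially identical to the paper's proof: both identify the left-hand side with $\sum_j |W^=_{x'_\ast,y'}(j)|=|W_{x'_\ast,y'}|=|W_{x_\ast,y}|$, and bound the right-hand side by $|W_{x_\ast,y}|$ using that the sets $J^=_{x'_\ast}(j)$ are pairwise disjoint so each index is counted at most once (the paper does this by interchanging the order of summation, you by the unique-index partition argument, which is the same content). Your reading of the slightly ambiguous ``$\setminus S_{y'}$'' as $\setminus S_{y'}(j)$ matches how the paper's own proof interprets it.
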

\begin{proof}
    First observe that $W_{x_{\ast}',y'}(j)\cap S^c_{y'}(j)\cap J^{=}_{x_{\ast}',y'}(j)=W^=_{x'_\ast,y'}(j)$ (recall \eqref{eq:gradacjaW}) so by assumptions 
\begin{align*}
 \sum^n_{j=1} \sum_{i\in J^{=}_{x_{\ast}',y'}(j)} \1_{i\in W_{x_{\ast}',y'}(j)\backslash S_{y'}(j)}=\sum_{j=1}^n |W^=_{x'_\ast,y'}(j)|=|W_{x'_\ast,y'}|=|W_{x_\ast,y}|=\sum_{i=1}^d \sum_{j=1}^n \1_{i\in W^=_{x_\ast,y}(j)} .
\end{align*}
%The same calculations can be made for $(x'_\ast,y')$ replaced by $(x_\ast,y)$. As a conclusion
%\begin{align}\label{eq:fat1}
%  \sum^n_{j=1} \sum_{i\in J^{=}_{x_{\ast}',y'}(j)} \1_{i\in W_{x_{\ast}',y'}(j)\backslash S_{y'}(j)}=|W_{x_\ast,y} |=\sum_{i=1}^d \sum_{j=1}^n \1_{i\in W^=_{x_\ast,y}(j)}   
%\end{align}
Now notice that
\begin{align*}
\sum^n_{j=1} \sum_{i\in J^{=}_{x'_{\ast}}(j)} \1_{i\in W_{x_{\ast},y}(j)\backslash S_y(j)}&\ls\sum^n_{j=1} \sum_{i\in J^{=}_{x'_{\ast}}(j)} \1_{i\in W_{x_{\ast},y}(j)}=\sum^n_{j=1} \sum_{i=1}^d \1_{i\in J^{=}_{x'_{\ast}}(j)}\sum_{l=j}^n \1_{i\in W^=_{x_\ast,y}(l)}\nonumber\\
&=\sum_{l=1}^n \sum_{i=1}^d \1_{i\in W^=_{x_\ast,y}(l)} \sum_{j=1}^l \1_{i\in J^{=}_{x'_{\ast}}(j)}\ls\sum_{l=1}^n \sum_{i=1}^d \1_{i\in W^=_{x_\ast,y}(l)}, 
\end{align*}
where the last inequality is true because the sets $(J^{=}_{x'_{\ast}}(j))_{j=1}^n$ are disjoint. 
\end{proof}

\begin{lema}\label{lem:dlugirach}
   Fix $x,x'\in \ccF$ and bad $y,y'\in [n]^{Kd}$. Let $x_\ast$ be a $(x,y)$ witness and $x'_\ast$ be a $(x',y')$ witness. Assume that
\begin{enumerate}
    \item $|J_{x_\ast,y}|=|J_{x'_\ast,y'}|$,
    \item $|W_{x_\ast,y}|=|W_{x'_\ast,y'}|$
    \item for any $j\ls n$ we have $S_y(j)\cup W_{x_\ast,y}(j)=S_{y'}(j)\cup W_{x'_\ast,y'}(j)$.
\end{enumerate}
Then $\va(x'_{\ast},y')\ls \va(x_{\ast}',y)$, where $\va(\cdot,\cdot)$ is the number from Lemma \ref{lem:pier}.
\end{lema}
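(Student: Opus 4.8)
The plan is to argue by contradiction and to reduce the whole statement to a single, very concrete set inequality that is fed by Lemma~\ref{lem:tech}. Recall from the proof of Lemma~\ref{lem:pier} that, writing $w_i:=t^{x'_\ast}_if(x'_\ast(i))$ and
\[F_y(\va):=\sum_{i=1}^d\1_{i\in S_{x'_\ast,y}}\min(w_i,\va)-\tfrac12\sum_{i=1}^d\min(w_i,\va),\]
one has $\va(x'_\ast,y)=\sup\{\va\gs0:\ F_y(\va)\gs0\}$, and $\va(x'_\ast,y')=\sup\{\va\gs0:\ F_{y'}(\va)\gs0\}$ with $S_{x'_\ast,y'}$ replacing $S_{x'_\ast,y}$; the weights $w_i$ are the \emph{same} in both because the base point $x'_\ast$ is the same. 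So suppose $\va_0:=\va(x'_\ast,y')>\va(x'_\ast,y)$. Since $F_{y'}$ and $F_y$ are continuous (piecewise affine in $\va$), the set $\{F_{y'}\gs0\}$ is closed and bounded, so $F_{y'}(\va_0)=0$; and $\va_0>\va(x'_\ast,y)$ forces $F_y(\va_0)<0$. Subtracting and using $\min(w,\va)=\int_0^\va\1_{w>v}\,dv$, the common $-\tfrac12\sum_i\min(w_i,\va)$ terms cancel and we get $\int_0^{\va_0}\big(|S_{x'_\ast,y'}\cap D_v|-|S_{x'_\ast,y}\cap D_v|\big)\,dv>0$, where $D_v:=\{i\in[d]:\ w_i>v\}$. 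Hence it suffices to prove $|S_{x'_\ast,y'}\cap D_v|\ls|S_{x'_\ast,y}\cap D_v|$ for every $v<\va_0$.

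To get at this I would slice by the levels of $x'_\ast$: since $[d]=\bigsqcup_{j=1}^nJ^{=}_{x'_\ast}(j)$ and $i\in S_{x'_\ast,y}\iff i\in S_y(x'_\ast(i))$, we have $|S_{x'_\ast,y}\cap D_v|=\sum_j|S_y(j)\cap J^{=}_{x'_\ast}(j)\cap D_v|$, and likewise for $y'$. Intersecting assumption~(3) at level $j$ with $J^{=}_{x'_\ast}(j)\cap D_v$ and using \eqref{eq:gradacjaW} together with $W^{=}_{x'_\ast,y'}(j)=W_{x'_\ast,y'}(j)\cap J^{=}_{x'_\ast}(j)\subset S^c_{y'}(j)$, the right side of~(3) becomes the disjoint union $\big(S_{y'}(j)\cap J^{=}_{x'_\ast}(j)\cap D_v\big)\sqcup\big(W^{=}_{x'_\ast,y'}(j)\cap D_v\big)$, while the left side is $\big(S_y(j)\cap J^{=}_{x'_\ast}(j)\cap D_v\big)\cup\big(A_j\cap D_v\big)$ with $A_j:=W_{x_\ast,y}(j)\cap J^{=}_{x'_\ast}(j)$. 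Comparing cardinalities level by level, summing over $j$, and using $\bigsqcup_jW^{=}_{x'_\ast,y'}(j)=W_{x'_\ast,y'}$ together with the fact that the $A_j$ are pairwise disjoint with $\bigsqcup_jA_j=\{i\in W_{x_\ast,y}:\ f(x_\ast(i))\gs f(x'_\ast(i))\}$, one obtains the exact identity, valid for every $v\gs0$:
\[|S_{x'_\ast,y}\cap D_v|-|S_{x'_\ast,y'}\cap D_v|=|W_{x'_\ast,y'}\cap D_v|-\big|\{i\in W_{x_\ast,y}\setminus S_{x'_\ast,y}:\ f(x_\ast(i))\gs f(x'_\ast(i)),\ w_i>v\}\big|.\]

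It remains to see that the right-hand side is $\gs0$ for $v<\va_0$. There $W_{x'_\ast,y'}\subset J_{x'_\ast,y'}\subset D_v$, so $|W_{x'_\ast,y'}\cap D_v|=|W_{x'_\ast,y'}|$; and the last term is at most $|\{i\in W_{x_\ast,y}\setminus S_{x'_\ast,y}:\ f(x_\ast(i))\gs f(x'_\ast(i))\}|$, which after unwinding $J^{=}_{x'_\ast}(j)$, $W_{x_\ast,y}(j)$ and $S_y(j)$ is exactly the right-hand side of Lemma~\ref{lem:tech}, hence $\ls|W_{x'_\ast,y'}|$ by that lemma and assumption~(2). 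Thus $|S_{x'_\ast,y'}\cap D_v|\ls|S_{x'_\ast,y}\cap D_v|$ for all $v<\va_0$, contradicting the integral inequality above, and the proof is complete. The step I expect to be the real obstacle is the bookkeeping in the second paragraph: one is juggling two different gradings — by $x'_\ast$ (which governs $D_v$, $W^{=}_{x'_\ast,y'}(j)$ and the slicing) and by $x_\ast$ (which governs $W_{x_\ast,y}(j)$) — and the whole argument hinges on the reassembled $x_\ast$-quantity being \emph{literally} the expression that Lemma~\ref{lem:tech} bounds; getting the set identities to match there, and checking that the reduction only ever needs $v<\va_0$ (so that no $D_v$-truncated version of Lemma~\ref{lem:tech}, and in particular no use of assumption~(1), is required), is the delicate part.
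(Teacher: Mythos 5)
Your proof is correct and rests on exactly the same ingredients as the paper's: both arguments reduce the claim to showing $\sum_{i=1}^d\bigl(\1_{i\in S_{x'_\ast,y}}-\1_{i\in S_{x'_\ast,y'}}\bigr)\min\bigl(t^{x'_\ast}_if(x'_\ast(i)),\va(x'_\ast,y')\bigr)\gs 0$, rewrite the difference of indicators levelwise through assumption (3) (turning it into a comparison of the sets $W_{x'_\ast,y'}(j)\setminus S_{y'}(j)$ and $W_{x_\ast,y}(j)\setminus S_y(j)$ sliced by $J^{=}_{x'_\ast}(j)$), and close with Lemma \ref{lem:tech} together with assumption (2), never using assumption (1). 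The only difference is organizational: you argue by contradiction and establish the pointwise-in-$v$ counting inequality $|S_{x'_\ast,y'}\cap\{i:t^{x'_\ast}_if(x'_\ast(i))>v\}|\ls|S_{x'_\ast,y}\cap\{i:t^{x'_\ast}_if(x'_\ast(i))>v\}|$ for $v<\va(x'_\ast,y')$, which integrates (layer cake) to the paper's key inequality \eqref{eq:loc5}, whereas the paper proves \eqref{eq:loc5} directly by splitting indices according to membership in $J_{x'_\ast,y'}$ and bounding the remaining weights by $\va(x'_\ast,y')$.
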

\begin{proof}
By the definition of $\va(\cdot,\cdot)$  (see \eqref{eq:defeps}) it is enough to show that
\begin{equation}\label{goc1}
  \sum^d_{i=1} \1_{i\in S_y(x_{\ast}'(i))} \max\rbr{t^{x'_{\ast}}_i f_i(x_{\ast}'),\va(x'_{\ast},y')} \gs c \sum^d_{i=1} \max\rbr{t^{x_{\ast}'}_i  f_i(x_{\ast}'(i)), \va(x'_{\ast},y')}.   
\end{equation}

Using the definitions of the sets $S_{x'_\ast,y},J_{x'_\ast,y'}$ we get
\begin{align}\nonumber
& \sum^d_{i=1} \1 _{i\in S_{y}(x'_{\ast}(i))} \min\rbr{t^{x'_{\ast}}_i f_i(x'_{\ast}(i))),\va(x'_{\ast},y')}  \\
&=\va(x'_{\ast},y')  \sum^d_{i=1} \1_{i\in S_y(x_{\ast}'(i))}  
\1_{t^{x'_{\ast}}_i f_i(x_{\ast}'(i))>\va(x'_{\ast},y')}  +  \sum^d_{i=1} \1_{i\in S_y(x_{\ast}'(i))} t^{x'_{\ast}}_i f_i(x_{\ast}'(i)) \1_{t^{x'_{\ast}}_i f_i(x_{\ast}'(i))\ls \va(x'_{\ast},y')}\nonumber\\
&=\va(x'_{\ast},y') |J_{x_{\ast}',y'}\cap S_{x_{\ast}',y}| +\sum^d_{i=1} \1_{i\not\in J_{x_{\ast}',y'}}\1_{i\in S_{x_{\ast}',y}} t^{x'_{\ast}}_i f_i(x_{\ast}'(i)) \nonumber \\
& =\va(x'_{\ast},y')\cbr{|J_{x_{\ast}',y'}\cap S_{x_{\ast}',y'}| +|J_{x_{\ast}',y'}\cap S_{x_{\ast}',y}|-|J_{x_{\ast}',y'}\cap S_{x_{\ast}',y'}| } \nonumber  \\
&+ \sum^d_{i=1} \1_{i\not\in J_{x_{\ast}',y'}}\1_{i\in S_{x_{\ast}',y'}} t^{x'_{\ast}}_i f_i(x_{\ast}'(i)) 
+ \sum^d_{i=1} \1_{i\not\in J_{x_{\ast}',y'}}\cbr{\1_{i\in S_{x_{\ast}',y}}-\1_{i\in S_{x_{\ast}',y'}}}  t^{x'_{\ast}}_i f_i(x_{\ast}'(i)), \label{eq:loc4}
\end{align}
where in the last line we just added and subtracted appropriate  quantities. By the same argument (using the definitions of the sets $S_{x'_\ast,y'},J_{x'_\ast,y'}$)
\begin{multline*}
\va(x'_{\ast},y')|J_{x_{\ast}',y'}\cap S_{x_{\ast}',y'}|+ \sum^d_{i=1} \1_{i\not\in J_{x_{\ast}',y'}}\1_{i\in S_{x_{\ast}',y'}} t^{x'_{\ast}}_i f_i(x_{\ast}'(i))\\
= \sum^d_{i=1} \1_{i\in S_y'(x_{\ast}'(i))} \max\rbr{t^{x'_{\ast}}_i f_i(x_{\ast}'),\va(x'_{\ast},y')} \gs c  \sum^d_{i=1} \max\rbr{t^{x_{\ast}'}_i  f_i(x_{\ast}'(i)), \va(x'_{\ast},y')},  
\end{multline*}
where the last inequality follows from the definition of $\va(x'_\ast,y')$. Considering the above inequality, \eqref{eq:loc4} implies \eqref{goc1} (so the assertion), provided that
\begin{align}\label{eq:loc5}
 \va(x'_{\ast},y')\left(|J_{x_{\ast}',y'}\cap S_{x_{\ast}',y}|-|J_{x_{\ast}',y'}\cap S_{x_{\ast}',y'}| \right)+  \sum^d_{i=1} \1_{i\not\in J_{x_{\ast}',y'}}\cbr{\1_{i\in S_{x_{\ast}',y}}-\1_{i\in S_{x_{\ast}',y'}}}  t^{x'_{\ast}}_i f_i(x_{\ast}'(i)) \gs 0.
\end{align}
To show \eqref{eq:loc5} observe that
\begin{align*}
  J^c_{x'_\ast,y'}&=\bigcup_{j=1}^n \left\{i: f_i(x'_\ast(i))=f_i(j),\ t^{x'_\ast}f_i(x'_\ast(i))\ls\va(x'_\ast,y')\right\}=\bigcup_{j=1}^n J^=_{x'_\ast}(j)\setminus J^=_{x'_\ast,y'}(j),\\
  J_{x'_\ast,y'}&=\bigcup_{j=1}^n J^=_{x'_\ast,y'}(j)
\end{align*}
 (we recall that the sets $J^=_{x'_\ast,y'}(j)$ are disjoint as well as $J^=_{x'_\ast}(j)$). Also for any $i\in J^{=}_{x'_{\ast}}(j)$ (trivially $J^{=}_{x_{\ast}',y'}(j)\subset J^{=}_{x'_{\ast}}(j)$) we have $\1_{S_{x'_\ast,y}}-\1_{S_{x'_\ast,y'}}=\1_{S_{x'_\ast,y}(j)}-\1_{S_{x'_\ast,y'}(j)}$.
Using these observations we obtain
\begin{align}
 &\va(x'_{\ast},y')\left(|J_{x_{\ast}',y'}\cap S_{x_{\ast}',y}|-|J_{x_{\ast}',y'}\cap S_{x_{\ast}',y'}| \right)+  \sum^d_{i=1} \1_{i\not\in J_{x_{\ast}',y'}}\cbr{\1_{i\in S_{x_{\ast}',y}}-\1_{i\in S_{x_{\ast}',y'}}}  t^{x'_{\ast}}_i f_i(x_{\ast}'(i)) \nonumber \\
& =\va(x'_{\ast},y')\sum^n_{j=1}  \sum_{i\in J^{=}_{x_{\ast}',y'}(j)} \rbr{\1_{i\in S_y(j)}-  
\1_{i\in S_{y'}(j)} } 
+\sum^n_{j=1} \sum_{i\in J^{=}_{x'_{\ast}}(j)\backslash J^{=}_{x'_{\ast},y'}(j)}  
\cbr{\1_{i\in S_{y}(j)}-\1_{i\in S_{y'}(j)}   }t^{x'_{\ast}}_i f_i(x_{\ast}'(i))\nonumber \\
& =\va(x'_{\ast},y')\sum^n_{j=1} \sum_{i\in J^{=}_{x_{\ast}',y'}(j)} \left(\1_{i\in W_{x_{\ast}',y'}(j)\backslash S_{y'}(j)}-\1_{i\in W_{x_{\ast},y}(j)\backslash S_y(j)}\right)  \nonumber\\
& +\sum^n_{j=1} \sum_{i\in J^{=}_{x'_{\ast}}(j)\backslash J^{=}_{x'_{\ast},y'}(j)}  
\left(\1_{i\in W_{x_{\ast}',y'}(j)\backslash S_{y'}}-\1_{i\in W_{x_{\ast},y}(j)\backslash S_y(j)}\right)t^{x'_{\ast}}_i f_i(x_{\ast}'(i))\label{eq:kondlug}
\end{align}
where the second equality follows since by the assumption $3)$ of the lemma we have
\begin{align*}
 \1_{i\in S_y(j)}- \1_{i\in S_{y'}(j)}&= \1_{i\in  W_{x_{\ast},y}(j)\cup S_y(j)}-\1_{i\in W_{x_{\ast},y}(j)\backslash S_y(j)} -\left(\1_{i\in W_{x_{\ast}',y'}(j)\cup S_{y'}(j)}-\1_{i\in W_{x_{\ast}',y'}(j)\backslash S_{y'}(j)}\right)\\
 &=\1_{i\in W_{x_{\ast}',y'}(j)\backslash S_{y'}(j)}-\1_{i\in W_{x_{\ast},y}(j)\backslash S_y(j)}.
\end{align*}
%From definitions of our sets we have that $W_{x'_\ast,y'}(j)\cap (J^=_{x'_\ast}(j) \setminus J^=_{x'_\ast,y'}(j))=\emptyset$. Moreover, if $i\in J^{=}_{x'_{\ast}}(j)\backslash J^{=}_{x'_{\ast},y'}(j)$  then $t^{x'_{\ast}}_i f_i(x_{\ast}'(i)) \ls \va(x'_\ast,y')$ so
If $i\in J^{=}_{x'_{\ast}}(j)\backslash J^{=}_{x'_{\ast},y'}(j)$  then $t^{x'_{\ast}}_i f_i(x_{\ast}'(i)) \ls \va(x'_\ast,y')$ so
\begin{align*}
&\va(x'_{\ast},y')\sum^n_{j=1} \sum_{i\in J^{=}_{x_{\ast}',y'}(j)} \left(\1_{i\in W_{x_{\ast}',y'}(j)\backslash S_{y'}(j)}-\1_{i\in W_{x_{\ast},y}(j)\backslash S_y(j)}\right)\\
&+\sum^n_{j=1} \sum_{i\in J^{=}_{x'_{\ast}}(j)\backslash J^{=}_{x'_{\ast},y'}(j)}  
\left(\1_{i\in W_{x_{\ast}',y'}(j)\backslash S_{y'}}-\1_{i\in W_{x_{\ast},y}(j)\backslash S_y(j)}\right)t^{x'_{\ast}}_i f_i(x_{\ast}'(i))  \\
&\gs \va(x'_\ast,y)\left(\sum^n_{j=1} \sum_{i\in J^{=}_{x_{\ast}',y'}(j)} \left(\1_{i\in W_{x_{\ast}',y'}(j)\backslash S_{y'}(j)}-\1_{i\in W_{x_{\ast},y}(j)\backslash S_y(j)}\right)\right.\nonumber\\
&\left.-\sum^n_{j=1} \sum_{i\in J^{=}_{x'_{\ast}}(j)\backslash J^{=}_{x'_{\ast},y'}(j)}  
\1_{i\in W_{x_{\ast},y}(j)\backslash S_y(j)}\right)  \\
&=\va(x'_\ast,y)\left(\sum^n_{j=1} \sum_{i\in J^{=}_{x_{\ast}',y'}(j)} \1_{i\in W_{x_{\ast}',y'}(j)\backslash S_{y'}}-\sum^n_{j=1} \sum_{i\in J^{=}_{x'_{\ast}}(j)} \1_{i\in W_{x_{\ast},y}(j)\backslash S_y(j)}\right)\gs 0, 
\end{align*}
where in the last line we used Lemma \ref{lem:tech}. Equality \eqref{eq:kondlug} and the inequality aboe imply \eqref{eq:loc5}.
\end{proof}

\begin{lema}\label{lem:zaw}
    Under the assumptions of Lemma \ref{lem:dlugirach} we have $W_{x_\ast,y}\subset J_{x'_\ast,y'}$.

\end{lema}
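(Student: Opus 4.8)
The plan is to prove the inclusion coordinate by coordinate: fix $i\in W_{x_\ast,y}$, put $j:=x_\ast(i)$, and show that $i\in J_{x'_\ast,y'}$. Since $W_{x_\ast,y}=J_{x_\ast,y}\cap S^c_{x_\ast,y}$, we have $t^{x_\ast}_if(x_\ast(i))>\va(x_\ast,y)$, hence $i\in J_{x_\ast,y}(j)$ but $i\notin J_{x_\ast,y}(j+1)$ (because $f(x_\ast(i))=f(j)<f(j+1)$); moreover $\forall_{l\ls K}f(y^l(i))<f(x_\ast(i))=f(j)$, hence $i\notin S_y(j)$ and a fortiori $i\notin S_y(j+1)$. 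Reading off the definitions (see also \eqref{eq:gradacjaW}) we conclude $i\in W_{x_\ast,y}(j)$ while $i\notin W_{x_\ast,y}(j+1)$.

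I then feed these facts into hypothesis $3)$ of Lemma \ref{lem:dlugirach} at the two levels $j$ and $j+1$. From $i\in W_{x_\ast,y}(j)\subset S_y(j)\cup W_{x_\ast,y}(j)=S_{y'}(j)\cup W_{x'_\ast,y'}(j)$ we obtain $i\in W_{x'_\ast,y'}(j)$ or $i\in S_{y'}(j)$, while from $i\notin S_y(j+1)\cup W_{x_\ast,y}(j+1)=S_{y'}(j+1)\cup W_{x'_\ast,y'}(j+1)$ we obtain $i\notin S_{y'}(j+1)$ and $i\notin W_{x'_\ast,y'}(j+1)$. If $i\in W_{x'_\ast,y'}(j)$ we are done, since $W_{x'_\ast,y'}(j)\subset J_{x'_\ast,y'}(j)\subset J_{x'_\ast,y'}$. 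Hence it remains to handle the boundary case: $i\in S_{y'}(j)\setminus S_{y'}(j+1)=S^=_{y'}(j)$, i.e.\ $\max_{l\ls K}f(y'^l(i))=f(j)=f(x_\ast(i))$, and in addition $i\notin W_{x'_\ast,y'}(j)$, $i\notin W_{x'_\ast,y'}(j+1)$.

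In this boundary case I would argue by contradiction, and this is the step I expect to be the real obstacle. Assume $i\notin J_{x'_\ast,y'}$; since $W_{x'_\ast,y'}(j)\cup W_{x'_\ast,y'}(j+1)\subset J_{x'_\ast,y'}$ the relations above are then automatically consistent, so the contradiction must be drawn from the extremality built into the definition of the witness $x'_\ast$ (Definition \ref{def:swiad}). The plan is to produce a strictly better competitor by changing $x'_\ast$ only at coordinate $i$, setting its value to $j$, to obtain $\tilde x$; one checks $\tilde x\in\ccF$ (raising a coordinate can only increase $\sup_{t\in T}\sum_i t_if(\cdot)$, and in the sub-case $x'_\ast(i)>j$, where the coordinate is lowered, the loss must be controlled using $t^{x'_\ast}_if(x'_\ast(i))\ls\va(x'_\ast,y')$), one checks that $\tilde x$ is still admissible for $(x',y')$ in the sense of \eqref{eq:loccc1} — the only coordinate that could create a new violation is $i$, but $f(\tilde x(i))=f(j)=\max_l f(y'^l(i))$ forces $i\in S_{\tilde x,y'}$, hence $i\notin W_{\tilde x,y'}$, so $i$ imposes no constraint — and one checks that the modification does not enlarge $|J_{\cdot,y'}|$ while strictly shrinking $|W_{\cdot,y'}|$ (or strictly shrinking $|J_{\cdot,y'}|$ itself), contradicting the minimal choice of $x'_\ast$. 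The genuinely delicate part is tracking how the optimizer $t^{\tilde x}$ and the threshold $\va(\tilde x,y')$ move under this one-coordinate perturbation, hence which indices enter or leave $J$ and $W$; here I expect Lemma \ref{lem:dlugirach} (the comparison $\va(x'_\ast,y')\ls\va(x'_\ast,y)$) and Lemma \ref{lem:pier} to be exactly the tools needed to keep that bookkeeping tight.
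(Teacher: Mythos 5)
Your reduction to the case $i\in S^{=}_{y'}(j)$, $i\notin W_{x'_\ast,y'}$ is correct, but that ``boundary case'' is the entire content of the lemma, and the perturbation argument you sketch for it has concrete obstructions, so this is a genuine gap. First, $\tilde x\in\ccF$ is not guaranteed: membership in $\ccF$ only asserts $\sum_i t^{x}_i f(x(i))>L$ with no quantitative slack, so in the sub-case $x'_\ast(i)>j$ the loss from lowering the coordinate (which you can only bound by $\va(x'_\ast,y')$) may push the sum below $L$. Second, admissibility of $\tilde x$ for $(x',y')$ is not automatic at coordinates other than $i$: changing one coordinate changes the maximizer $t^{\tilde x}$ and the threshold $\va(\tilde x,y')$ globally, so new indices may enter $W_{\tilde x,y'}$ at which the constraint $f(\tilde x(i'))\ls f(x'(i'))$ was never checked. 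Third, and most seriously, there is no mechanism producing a \emph{strict} improvement: under your contradiction hypothesis $i\notin J_{x'_\ast,y'}$, the coordinate $i$ lies in neither $J_{x'_\ast,y'}$ nor $W_{x'_\ast,y'}$, so modifying it removes nothing from these sets, and the paper provides no tool to compare $\va(\tilde x,y')$ with $\va(x'_\ast,y')$ for two \emph{different} first arguments --- Lemma \ref{lem:dlugirach} compares $\va(x'_\ast,y')$ with $\va(x'_\ast,y)$ for the same point and requires hypotheses 1)--3), which the pair $(\tilde x, x'_\ast)$ does not satisfy. Hence no contradiction with the minimality of $x'_\ast$ is actually reached.

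A telling sign is that your plan never uses hypothesis 1) ($|J_{x_\ast,y}|=|J_{x'_\ast,y'}|$) nor the minimality of $x_\ast$ as the $(x,y)$ witness, and these drive the actual argument, which is global rather than coordinate-wise. The paper first shows $W_{x'_\ast,y}(j)\subset W_{x_\ast,y}(j)$ for all $j$: for $i\in J^{=}_{x'_\ast,y}(l)\cap S^c_y(l)$ one has $\va(x'_\ast,y')\ls\va(x'_\ast,y)$ (Lemma \ref{lem:dlugirach}), hence $i\in J^{=}_{x'_\ast,y'}(l)$, and hypothesis 3) together with $i\notin S_y(l)$ forces $i\in W_{x_\ast,y}(l)$. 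This inclusion makes $x'_\ast$ an $(x,y)$-admissible point, so minimality of $|J_{x_\ast,y}|$ plus hypothesis 1) gives $|J_{x'_\ast,y'}|=|J_{x_\ast,y}|\ls|J_{x'_\ast,y}|$, while $\va(x'_\ast,y')\ls\va(x'_\ast,y)$ gives $J_{x'_\ast,y}\subset J_{x'_\ast,y'}$, hence $J_{x'_\ast,y}=J_{x'_\ast,y'}$; then minimality of $|W_{x_\ast,y}|$ combined with $W_{x'_\ast,y}\subset W_{x_\ast,y}$ yields $W_{x_\ast,y}=W_{x'_\ast,y}\subset J_{x'_\ast,y}=J_{x'_\ast,y'}$. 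You should abandon the one-coordinate perturbation and argue along these lines, playing the two extremality properties of the witness against the cross-evaluation of $x'_\ast$ on $y$.
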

\begin{proof}
Take any $i\in W_{x'_\ast,y}(j)=\bigcup_{l=j}^n J^=_{x'_\ast,y}(l)\cap S^c_y(l)$ (recall formula \eqref{eq:gradacjaW}).  So $i\in J^=_{x'_\ast,y}(l)\cap S^c_y(l)$  for some fixed $l\gs j$.  
Thus 
\begin{equation}\label{for:iwlas}
    i\notin S_y(l), \textrm{ and } \va(x'_\ast,y)<t^{x'_\ast}_i f_i(x'_\ast(i)), \ x'_\ast(i)=l.
\end{equation}
Lemma \ref{lem:dlugirach} implies $\va(x'_\ast,y')\ls\va(x'_\ast,y)$ so  $i\in J^=_{x'_\ast,y'}(l)$. By dichotomy, \eqref{eq:gradacjaW} and the assumption $3)$ of Lemma \ref{lem:dlugirach}
\[i \in \left( J^=_{x'_\ast,y'}(l)\cap S^c_{y'}(l) \right) \cup S_{y'}(l)\subset W_{x'_\ast,y'}(l)\cup S_{y'}(l)    =W_{x_\ast,y}(l)\cup S_y(l).  \]
So we have that $i\in W_{x_\ast,y}(l)\subset W_{x_\ast,y}(j)$ (we recall \eqref{for:iwlas} and that by \eqref{eq:gradacjaW} the family $W_{x_\ast,y}(\cdot)$ is decreasing). As a result,
\begin{equation}\label{for:wzaw}
W_{x'_\ast,y}(j)\subset W_{x_\ast,y}(j).    
\end{equation}
This means that $x'_\ast$ is $(x,y)$ admissible (since $x_*$ is $(x,y)$ admissible by Definition \ref{def:swiad}).  By assumption $1)$ of Lemma \ref{lem:dlugirach} and by the optimal choice  of $x_\ast$ we get that
\[|J_{x'_\ast,y'}|=|J_{x_\ast,y}|\ls |J_{x'_\ast,y}|.\]
However, by Lemma \ref{lem:dlugirach}  $\va(x'_\ast,y')\ls\va(x'_\ast,y)$  so $J_{x'_\ast,y}\subset J_{x'_\ast,y'}$. As a result
\begin{equation}\label{eq:loc7}
 J_{x'_\ast,y}=J_{x'_\ast,y'}.    
\end{equation}
Since $x_\ast$ is a $(x,y)$ witness, we have that $|W_{x_\ast,y}|\ls|W_{x'_\ast,y}|$ (recall Definition \ref{def:swiad}). On the other hand  \eqref{for:wzaw} implies
\[W_{x'_\ast,y}=\bigcup_{j=1}^n W_{x'_\ast,y}(j)\subset\bigcup_{j=1}^n W_{x_\ast,y}(j)=W_{x_\ast,y}. \]
So in fact $W_{x_\ast,y}=W_{x'_\ast,y}$. The assertion follows because of \eqref{eq:loc7} $W_{x'_\ast,y}\subset J_{x'_\ast,y}=J_{x'_\ast,y'}$.    
\end{proof}

\begin{defi}\label{def:klas}
Let $j\gs t$ be natural numbers and $Z(1)\supset \ldots \supset Z(n)$ be a decreasing sequence of subsets of $[n]$.
By $\klas$ we denote all subsets of $[n]$ with the following property: $W\in \klas$ if and only if there exists $x\in \ccF$, $y\in [n]^{Kd}$ which is bad (recall Definition \ref{def:bady}) such that
\begin{enumerate}
    \item $W=W_{x_\ast,y}$,
    \item $j=|J_{x_\ast,y}|$,
    \item $t=|W_{x_\ast,y}|$, 
    \item for any $k=1,\ldots,n$ we have $Z(k)=S_y(k)\cup W_{x_\ast,y}(k),$
\end{enumerate}
where $x_\ast$ is the $(x,y)$ witness (recall Definition \ref{def:swiad}).
\end{defi}
\noindent Let $W\in \klas$, so that $W=W_{x_\ast,y}$ for some $x_\ast$ and bad $y\in [n]^{Kd}$. From \eqref{eq:wlasnoscif} we have that $W=W_{x_\ast,y}=W_{x_\ast,y}(1)$, so  that by the property $4)$ of Definition \ref{def:klas}
\begin{equation}\label{eq:WsiedziwZ}
    W\in \klas \Rightarrow W=W_{x_\ast,y}(1)\subset Z(1).
\end{equation}

\begin{rema}\label{rem:zakrespar}
The class $\klas$ is empty if $t<j/2$. To see this take any $W$ from this class and let $W=W_{x_\ast,y}$ where $x_\ast$ is a $(x,y)$ witness. Then by Lemma \ref{lem:pier}
\[t=|W|=|W_{x_\ast,y}|=|J_{x_\ast,y}\cap S^c_{x_\ast,y}|=|J_{x_\ast,y}|-|J_{x_\ast,y}\cap S_{x_\ast,y}|\gs 1/2|J_{x_\ast,y}|=j/2.\]
\end{rema}

%\begin{coro}\label{cor:zmianazmiennych}
%Let $\textrm{bad}([n]^{Kd})$ be the set of all $y\in [n]^{Kd}$ which are bad and for any such $y$ let $\ccG(y)$ be the cover defined in Fact \ref{fak:rodz}. Let also $\klas$ be the class defined in Definition \ref{def:klas}. Then for any non-negative  functions $g,h$ defined on the proper domains
%%\begin{align}\label{eq:czesci}
 %\sum_{y\in \textrm{ bad}([n]^{Kd})} g(y)  \sum_{(x_\ast,W_{x_\ast,y})\in \ccG(y)} h(x_\ast,W_{x_\ast,y}) =\sum_{j=1}^n\sum_{t\gs j/2}^j \sum_{(Z(k))_{k=1}^n}  %\sum_{W\in \klas} h(x_\ast,W_{x_\ast,y})\ \sum_{y\ra W}  g(y),  
%\end{align}
%where in the second sum from the right we sum over all decreasing families $(Z(k))_{k=1}^n$ of subsets of $[n]$ and (by a small abuse of the notation)
%\[y\to W := \{y\in \textrm{bad}([n]^{Kd}):\ \exists_{x\in \ccF} W=W_{x_\ast,y},\ x_\ast \textrm{ is } (x,y) \textrm{ witness} \}.\]
%\end{coro}
%\begin{proof}
%Formula \eqref{eq:czesci} is just a discrete change of variables. The sets $W$ from all the covers $(\ccG(y))_{y\in \textrm{bad}([n]^{Kd})}$ were grouped into classes $\klas$. Each such class consists of these sets $W$ which have the same specific parameters: $j=|J_{x_\ast,y}|,t=|W_{x_\ast,y}|,(Z(k))_{k=1}^n$ (we recall $Z(k)=S_y(k)\cup W_{x_\ast,y}(k)$). Then we sum over all $y\subset [n]^{Kd}$ which are bad and led to the creation of the witness $W$.
%\end{proof}

\begin{rema}\label{rema:formuly}
Fix $j,t,Z(1),\ldots,Z(n)$ as in Definition \ref{def:klas}. Take any $W\in \klas$. Let $x\in \ccF$, $y\in [n]^{Kd}$ be bad and $x_\ast$ be a $(x,y)$ witness such that $W=W_{x_\ast,y}$. Then the structure of $x_\ast,y$ is determined by $W$. By Definition \ref{def:klas}, $Z(k)=W_{x_\ast,y}(k)\cup S_y(k)$, so Lemma \ref{eq:wyliczonyZ} implies
\[W^=_{x_\ast,y}(k)\cup (S^=_y(k)\setminus W_{x_\ast,y})=Z(k)\setminus Z(k+1)=:Z^=(k).\]
Since $W^=_{x_\ast,y}(k)\subset W_{x_\ast,y}=W$ we have
\begin{equation}\label{eq:locsnorlax}
\rbr{S^{=}_{y}(k)\backslash W_{x_\ast,y}}=Z^=(k) \setminus W \textrm{ and } W^=_{x_\ast,y}(k)=Z^=(k) \cap W.    
\end{equation}

\end{rema}

\begin{coro}\label{cor:zlicz}
Fix any $j,t$  and  $Z(1)\subset \ldots \subset Z(n)$ as in Definition \ref{def:klas}. Then
\[|\klas| \ls\binom{j}{t}.\]
\end{coro}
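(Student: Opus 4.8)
The plan is to show that all members of $\klas$ are contained in a single, fixed $j$-element subset of $[d]$; once this is established, the bound is merely the number of $t$-element subsets of a $j$-element set.

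First I would dispose of the trivial case: if $\klas=\emptyset$ there is nothing to prove. Otherwise, fix once and for all a set $W_0\in\klas$ together with witnessing data, i.e. an $x^0\in\ccF$, a bad $y^0\in[n]^{Kd}$, and the $(x^0,y^0)$-witness $x^0_\ast$, so that, by Definition~\ref{def:klas}, $W_0=W_{x^0_\ast,y^0}$, $|J_{x^0_\ast,y^0}|=j$, $|W_0|=t$, and $S_{y^0}(k)\cup W_{x^0_\ast,y^0}(k)=Z(k)$ for all $k\ls n$. Write $J_0:=J_{x^0_\ast,y^0}$; this is a fixed subset of $[d]$ with $|J_0|=j$.

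Next, take an arbitrary $W\in\klas$ and let $x\in\ccF$, a bad $y\in[n]^{Kd}$, and the $(x,y)$-witness $x_\ast$ be the data realizing it, so $W=W_{x_\ast,y}$. The key observation is that the three hypotheses of Lemma~\ref{lem:dlugirach} are satisfied by the pair $(x,y)$, $(x^0,y^0)$: properties (2) and (3) of Definition~\ref{def:klas} give $|J_{x_\ast,y}|=j=|J_{x^0_\ast,y^0}|$ and $|W_{x_\ast,y}|=t=|W_{x^0_\ast,y^0}|$, while property (4) gives $S_y(k)\cup W_{x_\ast,y}(k)=Z(k)=S_{y^0}(k)\cup W_{x^0_\ast,y^0}(k)$ for every $k\ls n$. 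Hence Lemma~\ref{lem:zaw} applies and yields $W=W_{x_\ast,y}\subset J_{x^0_\ast,y^0}=J_0$.

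Finally, since every $W\in\klas$ satisfies $W\subset J_0$ and $|W|=t$ (note also $t\ls j$ because $W_{x_\ast,y}\subset J_{x_\ast,y}$), the family $\klas$ is contained in the collection of $t$-element subsets of the $j$-element set $J_0$; therefore $|\klas|\ls\binom{|J_0|}{t}=\binom{j}{t}$. I do not expect a genuine obstacle at this stage, since the substance is already packed into Lemmas~\ref{lem:dlugirach} and~\ref{lem:zaw}; the only points requiring care are (a) checking that the defining conditions of $\klas$ really line up with the hypotheses of Lemma~\ref{lem:dlugirach}, and (b) observing that it is enough to compare every member of the class against one fixed reference element $W_0$, so that the containing $j$-set $J_0$ does not vary and the estimate does not accumulate over different choices of reference.
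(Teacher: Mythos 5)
Your proposal is correct and follows essentially the same route as the paper: fix one reference member of $\klas$ with its witnessing data, use the defining properties of the class to verify the hypotheses of Lemma \ref{lem:dlugirach}, apply Lemma \ref{lem:zaw} to place every member inside the fixed $j$-element set $J_{x^0_\ast,y^0}$, and count $t$-element subsets. The only (immaterial) difference is that you spell out the verification of the hypotheses and the trivial empty case, which the paper leaves implicit.
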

\begin{proof}
    Fix $W\in \klas$ and consider any other $W'$ from this class. By definition, there exist $x,x'\in \ccF$,  $y,y'\in[n]^{Kd}$ which are bad, and $x_\ast,x'_\ast$ respectively $(x,y)$ and $(x',y')$ witnesses   such that
    \[W=W_{x_\ast,y},\ W'=W_{x'_\ast,y'}.\]
    But the pairs $(x_\ast,y),(x'_\ast,y')$ satisfy the assumption of Lemma \ref{lem:zaw} (by Definition of $\klas$) which states that
    \[W'=W_{x'_\ast,y'}\subset J_{x_\ast,y}.\]
The assertion follows, since by the definition of the class $\klas$, $|W'|=t,\ |J_{x_\ast,y}|=j$.
\end{proof}

\begin{proof}[Proof of Lemma \ref{lem:prawdzly}]
The family $\ccF$ is not $\delta$-small (recall Definition \ref{def:smallfam}) and by Fact \ref{fak:rodz} for any bad $y\in[n]^{Kd}$ the set $\ccG(y)=\{(x_\ast,W_{x_\ast,y}) \mid x\in \ccF\}$ (we recall that $x_\ast$ is the $(x,y)$ witness cf. Definition \ref{def:swiad}) is a cover of $\ccF$, so
\[\delta \ls \sum_{x_\ast\in \ccG(y)}  \P(\forall_{i\in W_{x_\ast,y}}X_i= x_\ast(i)).\]
The above and \eqref{eq:locpikachu} yield (by $\textrm{bad}([n]^{Kd})$ we denote the set of all $y\in [n]^{Kd}$ which are bad)
 \begin{align}
& \delta \sum_{y\in \textrm{ bad}([n]^{Kd}) }\P(Y=y)\ls \sum_{y\in \textrm{ bad}([n]^{Kd})}\P(Y=y)  \sum_{(x_\ast,W_{x_\ast,y})\in \ccG(y)}  \P(\forall_{i\in W_{x_\ast,y}}X_i= x_\ast(i))\nonumber \\
&= \sum_{y\in \textrm{ bad}([n]^{Kd})}\P(Y=y)  \sum_{(x_\ast,W_{x_\ast,y})\in \ccG(y)} \prod^n_{k=1} p_k^{|W^=_{x_{\ast},y}(k)|}\nonumber\\
& =\sum_{j=1}^n\sum_{t\gs j/2}^j \sum_{(Z(k))_{k=1}^n}  \sum_{W\in \klas}  \prod^n_{k=1} p_k^{|Z^=(k)\cap W|}\sum_{y\ra W}  \P(Y=y) ,\label{eq:loc1}
\end{align}
where in the last line we sum over all decreasing families $(Z(k))_{k=1}^n$ of subsets of $[n]$. We also used the notation
\[y\to W := \{y\in \textrm{bad}([n]^{Kd}):\ \exists_{x\in \ccF} W=W_{x_\ast,y},\ x_\ast \textrm{ is } (x,y) \textrm{ witness} \}.\]
Let us explain the last equality in \eqref{eq:loc1} which is a discrete change of variables. The sets $W$ from all the covers $(\ccG(y))_{y\in \textrm{bad}([n]^{Kd})}$ were grouped into disjoint classes $\klas$. Each such class consists of these sets $W$ which have the same specific parameters: $j=|J_{x_\ast,y}|,t=|W_{x_\ast,y}|,(Z(k))_{k=1}^n$ (we recall that $Z(k)=S_y(k)\cup W_{x_\ast,y}(k)$). By Remark \ref{rem:zakrespar}, $j/2 \ls t \ls j$ (otherwise $\klas$ is empty). Then we sum over all $y\subset [n]^{Kd}$ which are bad and led to the creation of the witness $W$. We  also used the formula \eqref{eq:locsnorlax} for $W^=_{x_\ast,y}(k)$ in terms of the new parameters. Also, by formula \eqref{eq:locsnorlax}
\begin{align}
\sum_{y\ra W}  \P(Y=y)&=\P(Y\ra W)\ls\P(\forall_{k\ls n} S^=_Y(k)\cap W^c = Z^=(k) \cap W^c)\nonumber \\
&=\prod_{k=1}^n\rbr{\rbr{\sum^k_{i=1} p_i}^{K}-\rbr{\sum^{k-1}_{i=1}p_i}^{K} }^{|Z^=(k)\setminus W|}.\label{eq:loc2} 
\end{align}
In the last equality we just use that $(Y^l(i))_{l\ls K, i\ls d}$ are i.i.d. with the same distribution as $X_1$. So the events $(S^=_Y(k)\cap W^c = Z^=(k) \cap W^c)_{k=1}^n$ are independent (since $Z^=(k) \cap W^c$ are disjoint). The rest is just a simple computation of the probability of a certain event. Now, by \eqref{eq:uciecie}
\[
\rbr{p_1+p_2+\ldots p_k}^K-\rbr{p_1+\ldots +p_{k-1}}^K\gs Kp_k \rbr{p_1+\ldots+p_{k-1}}^{K-1}\gs Kp_k (1-1/\cons)^{K-1}\gs Kp_k/e.
\]
This implies that 
\[ \prod^n_{k=1} p_k^{|Z^=(k)\cap W|}\ls\left( \frac{e}{K}\right)^t \prod^n_{k=1}  \rbr{\rbr{\sum^k_{i=1} p_i}^{K}-\rbr{\sum^{k-1}_{i=1}p_i}^{K} }^{|Z^=(k)\cap W|}\]
(from \eqref{eq:WsiedziwZ} we have that $t=|W|=\sum_{k=1}^n |Z^=(k)\cap W|$).
Plugging the above and \eqref{eq:loc2} into \eqref{eq:loc1} yields
\begin{align*}
&\delta \sum_{y\in \textrm{ bad}([n]^{Kd})}\P(Y=y)\\
&\ls    \sum_{j=1}^n\sum_{t\gs j/2}^j \sum_{(Z(k))_{k=1}^n}  \sum_{W\in \klas}  \left( \frac{e}{K}\right)^t \prod_{k=1}^n\rbr{\rbr{\sum^k_{i=1} p_i}^{K}-\rbr{\sum^{k-1}_{i=1}p_i}^{K} }^{|Z^=(k)|} \\
&\ls \sum_{j=1}^n\sum_{t\gs j/2}^j  \binom{j}{t}  \left( \frac{e}{K}\right)^t \sum_{(Z(k))_{k=1}^n}  \prod_{k=1}^n\rbr{\rbr{\sum^k_{i=1} p_i}^{K}-\rbr{\sum^{k-1}_{i=1}p_i}^{K} }^{|Z^=(k)|}, 
\end{align*}
where we used Corollary \ref{cor:zlicz} in the last line. Also, observe that
\[1\ge \sum_{(Z(k))_{k=1}^n} \P\left(\bigcap_{k=1}^n S^=_Y(k)=Z^=(k)\right)=\sum_{(Z(k))_{k=1}^n}\prod_{k=1}^n\rbr{\rbr{\sum^k_{i=1} p_i}^{K}-\rbr{\sum^{k-1}_{i=1}p_i}^{K} }^{|Z^=(k)|}. \]
So, in fact 
\begin{align*}
\delta \sum_{y\in \textrm{ bad}([n]^{Kd})}\P(Y=y) &\ls \sum_{t=1}^\infty \left( \frac{e}{K}\right)^t  \sum_{j=t}^{2t}  \binom{j}{t} = \sum_{t=1}^\infty \left( \frac{e}{K(1-1/C)}\right)^t  \binom{2t+1}{t+1}  \\
 &\ls \sum_{t=1}^\infty \left( \frac{4e}{K}\right)^t=\frac{4e}{K-4e}\ls \frac{\delta}{2},
\end{align*}
for (we recall that $\delta\ls 1$) $K\geq \lceil 12e / \delta\rceil$.
\end{proof}

\section*{Glossary}
\begin{center}
\begin{itemize}
    \item $J_x(j)=\{i\in [d]:f_i(x(i))\gs f_i(j)\}$,
    \item $J^=_x(j)=J_x(j)\setminus J_x(j+1)=\{i\in [d]:f_i(x(i))=f_i(j)\}$,
    \item $S_y(j)=\{i\in [d]:\exists \ l\ls K \; \; f_i(y^l(i))\gs f_i(j)\},$
    \item $ S^c_y(j)=\{i\in [d]:\forall_{l\ls K} \; \; f_i(y^l(i))< f_i(j)\}$,
    \item  $S^{=}_y(j):=S_{y}(j)\backslash S_y(j+1)=\{ i\in [d]:\; \forall_{l\in [K]}f_i(y^l(i))\ls f_i(j),\    \exists_{l\in [K]} \; f_i(y^l(i))=f_i(j) \}$,
    \item $S_{x,y}=\{i\in [d]: i\in S_y(x(i))\}=\{i\in [d]:\exists_{l\ls K} \; \; f_i(y^l(i))\gs f_i(x(i))\}$,
    \item $J_{x,y}=\{i\in [d]: t^x(i)f_i(x(i))>\va(x,y)$,
    \item $W_{x,y}=J_{x,y}\cap S^c_{x,y}=\{i:t^x(i)f_i(x(i))>\va(x,y),\forall_{l\ls K} \; \; f_i(y^l(i))\gs f_i(x(i))\}$,
    \item $J_{x,y}(j)=J_{x,y}\cap J_x(j)=\{i\in [d]: t^x(i)f_i(x(i))>\va(x,y),f_i(x(i))\gs f_i(j)\}$,
    \item $J^=_{x,y}(j)=J_{x,y}(j)\setminus J_{x,y}(j+1)=\{i\in [d]: t^x(i)f_i(x(i))>\va(x,y),f_i(x(i))=f_i(j)\}$,
    \item $S_{x,y}(j)=S_{x,y}\cap J_x(j)=\{i\in [d]: \exists_{l\ls K} \; \; f_i(y^l(i))\gs f_i(x(i)), f_i(x(i))\gs f_i(j)\}$,
    \item $W_{x,y}(j)=J_{x,y}(j)\cap S^c_{x,y}=\{i\in [d]: t^x(i)f_i(x(i))>\va(x,y), f_i(x(i))\gs f_i(j), \forall_{l\ls K} f_i(y^l(i))<f_i(x(i))\}$,
    \item $W^=_{x,y}(j)=W_{x,y}(j)\setminus W_{x,y}(j+1)=J^=_{x,y}(j)\cap S^c_y(j)=J^=_{x,y}\cap W_{x,y}$,
    \item $W^=_{x,y} = \{i\in [d]:t^{x}_if_i(x(i))>\va(x,y),\ f_i(x(i))= f_i(j) ,\ \forall_{l\ls K} f_i(y^l(i))<f_i(x(i))\}$.

  \end{itemize}
\end{center}

\end{document}